\documentclass[11pt,reqno]{amsart}

\usepackage[T1]{fontenc}
\usepackage{lmodern}
\usepackage{microtype}
\usepackage{amsmath,amssymb,amsthm,mathtools}
\usepackage{booktabs,array}
\usepackage{placeins}
\usepackage{graphicx}
\usepackage{enumitem}
\usepackage{hyperref}
\usepackage{aliascnt}
\usepackage[nameinlink,capitalize,noabbrev]{cleveref}
\usepackage[a4paper,margin=32mm]{geometry}

\allowdisplaybreaks[3]
\numberwithin{equation}{section}
\hypersetup{
  colorlinks=true,
  linkcolor=blue,
  citecolor=blue,
  urlcolor=blue,
  pdftitle={The 196560 auxiliary-function conjecture for the Leech lattice},
  pdfauthor={Yutong Zhang and Yaoran Yang},
  pdfsubject={Fourier interpolation and the Leech lattice},
  pdfkeywords={Fourier interpolation, Leech lattice, radial Schwartz function, modular forms}
}

\newtheorem{theorem}{Theorem}[section]
\newaliascnt{proposition}{theorem}
\newtheorem{proposition}[proposition]{Proposition}
\aliascntresetthe{proposition}
\newaliascnt{lemma}{theorem}
\newtheorem{lemma}[lemma]{Lemma}
\aliascntresetthe{lemma}
\newaliascnt{corollary}{theorem}
\newtheorem{corollary}[corollary]{Corollary}
\aliascntresetthe{corollary}
\newaliascnt{remark}{theorem}

\aliascntresetthe{remark}
\newaliascnt{definition}{theorem}

\aliascntresetthe{definition}

\newcommand{\R}{\mathbb R}
\newcommand{\C}{\mathbb C}
\newcommand{\Z}{\mathbb Z}

\newcommand{\Sch}{\mathcal S}
\newcommand{\rad}{\mathrm{rad}}
\newcommand{\Leech}{\Lambda_{24}}
\newcommand{\dd}{\,\mathrm d}
\newcommand{\e}{\mathrm e}
\newcommand{\ii}{\mathrm i}
\newcommand{\diag}{\operatorname{diag}}
\newcommand{\Oh}{O}

\title[The 196560 auxiliary-function conjecture]{The 196560 auxiliary-function conjecture for the Leech lattice}

\author{Yutong Zhang}
\address{School of Mathematics, Sichuan University,
24 First Loop Road South Section I,
Chengdu 610064, Sichuan, China}
\email{yutongzhang@stu.scu.edu.cn}
\thanks{Corresponding author: Yutong Zhang}

\author{Yaoran Yang}
\address{School of Mathematics, Sichuan University,
24 First Loop Road South Section I,
Chengdu 610064, Sichuan, China}
\email{yangyaoran@stu.scu.edu.cn}

\subjclass[2020]{Primary 42A38; Secondary 11F27, 11H31, 52C17}
\keywords{Fourier interpolation, Leech lattice, radial Schwartz function, modular form, Poisson summation, sphere packing}

\begin{document}

\begin{abstract}
Cohn and Kumar conjectured in 2009 that there is a radial Schwartz function
$g\colon\R^{24}\to\R$ satisfying $g(r)\leq0$ for $r\geq\sqrt6$,
$\widehat g(r)\geq0$ for $r\geq0$, $g(2)>0$, and
$(\widehat g(0)-g(0))/g(2)=196560$.  We construct such functions from the
radial Fourier interpolation basis in dimension $24$.  If $a_2,b_2$ denote
the basis functions dual to value and radial-derivative interpolation at radius
$2$, then $g_C=a_2-Cb_2$ has exactly the nodal data needed for Poisson
summation over the Leech lattice.  The sphere-packing magic function identifies
$b_2$ and supplies its strict signs.  We prove that the removable quotients
$a_2/b_2$ and $\widehat a_2/\widehat b_2$ are bounded on the required
half-lines.  The noncompact step follows from exact coefficient extraction in
the interpolation kernel and an $S$-cusp expansion.  Both quotients tend to
$(43+240\log2)/15$; for all sufficiently large radii they lie on opposite
sides of this limit, with an explicit first exponential correction.
Consequently the admissible parameters in this affine family form a nonempty
closed ray, and every member proves the conjectured identity.  The same
interpolation basis recovers every nontrivial Leech-shell coefficient.
\end{abstract}

\maketitle

\section{Introduction and main results}

We use the Fourier transform convention
$\widehat h(y)=\int_{\R^{24}}h(x)\e^{-2\pi\ii\langle x,y\rangle}\dd x$.
For a radial function we write $h(r)$ for its value on $|x|=r$ and $h'(r)$
for its radial derivative.

Cohn and Kumar explicitly posed the following problem in their proof of the
lattice optimality of the Leech lattice \cite{CK2009}.  Find a radial Schwartz
function $g\colon\R^{24}\to\R$ such that
\begin{align}
 g(r)&\leq0 &&(r\geq\sqrt6), \label{eq:CK-direct-sign}\\
 \widehat g(r)&\geq0 &&(r\geq0), \label{eq:CK-fourier-sign}\\
 g(2)&>0, \label{eq:CK-positive-value}
\end{align}
and
\begin{equation}\label{eq:CK-ratio}
  \frac{\widehat g(0)-g(0)}{g(2)}=196560.
\end{equation}
The number in \eqref{eq:CK-ratio} is forced by the $196560$ shortest vectors
of the Leech lattice and is the largest value compatible with the Poisson
lower-bound argument of \cite{CK2009}.  The surrounding linear-programming
method originated in \cite{CE2003}.

The radial interpolation theorem of Cohn, Kumar, Miller, Radchenko, and
Viazovska \cite{CKMRV2022} supplies Schwartz functions
$a_n,b_n,\widetilde a_n,\widetilde b_n\in\Sch_{\rad}(\R^{24})$ for $n\geq2$,
which are dual to the values and first radial derivatives of a function and its
Fourier transform at the radii $\sqrt{2n}$.  Our construction uses only the
first value--derivative pair:
\begin{equation}\label{eq:gC-definition}
  g_C:=a_2-Cb_2.
\end{equation}
The sign anchor is the optimal sphere-packing auxiliary function from
\cite{CKMRV2017}; it is a nonzero scalar multiple of $b_2$ by interpolation
uniqueness.  The remaining issue is the boundedness of two quotients on
noncompact intervals.

Define the quotients initially away from their common interpolation zeros by
\begin{equation}\label{eq:R-definitions}
 R(r):=\frac{a_2(r)}{b_2(r)}\quad(r\geq\sqrt6),
 \qquad
 \widehat R(r):=\frac{\widehat a_2(r)}{\widehat b_2(r)}\quad(r\geq0).
\end{equation}
We prove that every apparent singularity in \eqref{eq:R-definitions} is
removable, both functions extend smoothly, and
\begin{equation}\label{eq:rho-definition}
 \lim_{r\to\infty}R(r)
 =\lim_{r\to\infty}\widehat R(r)
 =\rho,
 \qquad
 \rho:=\frac{43+240\log2}{15}.
\end{equation}
Thus the following finite threshold is well defined:
\begin{equation}\label{eq:Cstar-definition}
 C_*:=\max\left\{
   \sup_{r\geq\sqrt6}R(r),
   \sup_{r\geq0}\widehat R(r)
 \right\}.
\end{equation}

\begin{theorem}[The $196560$ auxiliary-function conjecture]\label{thm:main}
For every real $C\geq C_*$, the radial Schwartz function $g_C=a_2-Cb_2$
satisfies \eqref{eq:CK-direct-sign}--\eqref{eq:CK-ratio}; in fact $g_C(2)=1$.
Moreover, within the affine line $\{a_2-Cb_2:C\in\R\}$, the set of admissible
parameters is exactly the closed ray $[C_*,\infty)$.
\end{theorem}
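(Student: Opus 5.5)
Given the structure set up above, the theorem should reduce to bookkeeping once the quotient analysis is done, so the plan has three steps.

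\textbf{Step 1: interpolation data.} First I would record the data of $g_C$ coming from the defining duality of the Cohn--Kumar--Miller--Radchenko--Viazovska basis \cite{CKMRV2022}. Since $a_2$ is dual to value interpolation at radius $2=\sqrt{4}$, and $b_2$ to derivative interpolation there, we have:
\begin{itemize}
\item $a_2(2)=1$ and $b_2(2)=0$, so $g_C(2)=1>0$ for every $C$;
\item both $a_2$ and $b_2$, together with their Fourier transforms, vanish at every radius $\sqrt{2n}$ with $n\geq3$;
\item $\widehat a_2(\sqrt{2n})=\widehat b_2(\sqrt{2n})=0$ also for $n=2$.
\end{itemize}
I also need the values at the origin, i.e.\ the $n=0$ data of the interpolation formula. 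Applying Poisson summation over $\Leech$ to $g_C$, all terms vanish except the origin and the $196560$ vectors of norm $4$. This gives
\[
g_C(0)+196560\,g_C(2)=\widehat g_C(0),
\]
because $\Leech$ is unimodular and its dual shells also lie at radii $\sqrt{2n}$ with $n\ge2$. Hence \eqref{eq:CK-ratio} holds automatically for every $C$, with $g_C(2)=1$. So admissibility of $g_C$ reduces exactly to the two sign conditions \eqref{eq:CK-direct-sign} and \eqref{eq:CK-fourier-sign}.

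\textbf{Step 2: sign conditions in terms of the quotients.} Next I would identify $b_2$ as a nonzero multiple of the magic function $f$ of \cite{CKMRV2017}. By interpolation uniqueness, $f$ has the same vanishing data as $b_2$ (double roots at $\sqrt{2n}$, $n\ge3$, for both $f$ and $\widehat f$, and $f(2)=0$). Fix the normalization constant $\lambda$ in $b_2=\lambda f$. The sign properties of $f$ then give strict signs for $b_2$: $b_2<0$ on $[\sqrt6,\infty)$ away from its zeros, and $\widehat b_2>0$ away from zeros (the orientation to be checked via $b_2'(2)=1$).

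With these signs, $g_C=b_2(R-C)\le0$ for $r\ge\sqrt6$ is equivalent to $C\ge R(r)$ at every non-zero point. Similarly $\widehat g_C=\widehat b_2(\widehat R-C)\ge0$ is equivalent to $C\ge\widehat R(r)$, provided the sign of $\widehat b_2$ is matched appropriately.

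The zeros need separate care. At the nodes the quotients are removable singularities, and both numerator and denominator vanish there, so these points impose no constraint. At any possible extra zeros of $b_2$, the inequality follows by continuity. Thus the admissible set is exactly $\{C:\ C\ge R(r)\ \forall r\ge\sqrt6,\ C\ge\widehat R(r)\ \forall r\ge0\}=[C_*,\infty)$. This is a closed ray, nonempty as soon as $C_*<\infty$.

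\textbf{Step 3: finiteness of $C_*$ (the main obstacle).} To show $C_*<\infty$, I would proceed in two parts.
\begin{itemize}
\item \emph{Compact part.} On compact sets, finiteness follows from removability. At a double zero of $b_2$ the numerator also has a double zero, by the same vanishing data. At $r=2$ the zero of $b_2$ is outside $[\sqrt6,\infty)$ in the direct-space quotient, and it is also harmless for $\widehat R$.
\item \emph{Noncompact part.} The hard step is $r\to\infty$. Here I would write $a_2$ and $b_2$ as Laplace-type integrals $\int_0^{\ii\infty}K(z)\,e^{\pi\ii r^2 z}\,dz$ of the interpolation kernels. For large $r$ these integrals are dominated by the behavior near the $S$-cusp $z\to0$. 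Extracting the leading coefficients exactly, the ratio should tend to $\rho$, with a logarithmic term producing the $\log 2$.
\end{itemize}
This asymptotic step is where I expect the real work to lie. Its outcome is that $R$ and $\widehat R$ are continuous on their closed half-lines with finite limits, hence bounded, which completes the proof.
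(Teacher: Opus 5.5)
Your architecture matches the paper's. Poisson summation over the self-dual lattice $\Leech$ makes \eqref{eq:CK-ratio} hold for every $C$ with $g_C(2)=1$. The sphere-packing function supplies the signs of $b_2$ and $\widehat b_2$, and admissibility reduces to upper bounds for $R$ and $\widehat R$. But Step~2 has two defects.

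\emph{First}, the signs you write are reversed and contradict the equivalence you then use. From $b_2(2)=0$, $b_2'(2)=1$ and $f_{\mathrm{sp}}\le0$ for $r\ge2$, the constant is negative: $b_2=-16380f_{\mathrm{sp}}$. Hence, away from the nodes, $b_2>0$ on $(2,\infty)$ and $\widehat b_2<0$ on $[0,\infty)$. With your signs the two conditions would become $C\le R$ and $C\le\widehat R$.

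\emph{Second}, extra zeros cannot be handled ``by continuity''. At a zero of $b_2$ that is not a node, $g_C=a_2$ is independent of $C$, and $R$ is in general unbounded nearby. So the compact part of Step~3 needs the only zeros of $b_2$ on $[\sqrt6,\infty)$ and of $\widehat b_2$ on $[0,\infty)$ to be the nodes, each exactly double. The paper proves this from the representation $\sin^2(\pi r^2/2)\int_0^\infty A(t)\e^{-\pi r^2t}\dd t$ with $A\le0$, $A\not\equiv0$, so the Laplace factor never vanishes. On $0\le r\le\sqrt2$, where the Fourier-side integral diverges, it uses the continuation formula \eqref{eq:fsp-low-frequency-continuation} together with $\widehat f_{\mathrm{sp}}(0)=1$ and $\widehat f_{\mathrm{sp}}(\sqrt2)=1/156$.

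The main gap is the noncompact part of Step~3. You state it as an expectation rather than prove it, and it is where the theorem's content lies. Nothing in your argument excludes $a_2$ decaying more slowly than $b_2$ (or $\widehat a_2$ more slowly than $\widehat b_2$). In that case the quotients are unbounded, and finiteness of $C_*$ would hinge on signs you have not computed; this finiteness is also what makes the ray nonempty. What must be shown is that $\alpha_2(\ii t)$, $\beta_2(\ii t)$ and the tilded pair all have leading order $t^{10}\e^{-\pi/t}$ as $t\to0^+$, with nonzero coefficient for $\beta_2$ and $\widetilde\beta_2$.

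The paper does this in three steps:
\begin{enumerate}
\item It extracts the exact $\e^{4\pi\ii\tau}$-coefficients of $K_\pm$ (\cref{prop:kernel-coefficients}) and passes to power series $\mathcal A,\mathcal B$ in $x=\e^{-\pi/t}$.
\item It checks that the coefficients of $x^0,\dots,x^4$ cancel, that $[x^5]\mathcal B=132120576\neq0$, and that the leading coefficients of $\alpha_2,\beta_2$ have ratio $\rho$.
\item The Fourier side follows from the substitution $x\mapsto-x$, which flips both odd-degree leading coefficients and so leaves the limit $\rho$.
\end{enumerate}
You then still need the large-$t$ bounds: growth $\e^{4\pi t}$ on the direct side and $\e^{2\pi t}$ on the Fourier side. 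This is why the Laplace formulas hold only for $r^2>4$, resp.\ $r^2>2$. Finally you need a transfer lemma against the model integrals $J_a(r)=\int_0^\infty t^{10}\e^{-a\pi/t-\pi r^2t}\dd t$ to turn the cusp asymptotics into $R,\widehat R\to\rho$.
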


The theorem gives more than existence: it identifies the complete feasible set
in the natural affine family determined by the interpolation data.  No numerical
approximation to $C_*$ is required.  The refined asymptotic analysis below
shows that
\begin{equation}\label{eq:Cstar-strict-lower}
 C_*>\rho>0.
\end{equation}

A sentence following the original conjecture suggested that the same method
should recover every coefficient of the Leech theta series.  Fourier
interpolation makes this exact.

\begin{corollary}[Recovery of every Leech shell]\label{cor:theta-shells}
For $m\geq2$, let $N_m:=\#\{x\in\Leech:|x|^2=2m\}$.  Then
\begin{equation}\label{eq:shell-interpolation}
 N_m=\widehat a_m(0)-a_m(0).
\end{equation}
Equivalently,
\begin{equation}\label{eq:shell-ramanujan}
 N_m=\frac{65520}{691}\bigl(\sigma_{11}(m)-\tau(m)\bigr),
\end{equation}
where $\tau(m)$ is Ramanujan's tau function.
\end{corollary}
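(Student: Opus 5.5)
The plan is to apply the radial Fourier interpolation formula of \cite{CKMRV2022} in dimension $24$ to the Leech lattice theta function restricted to a Gaussian, and then to read off the coefficient $N_m$ by duality. Fix $m\geq2$ and let $f\in\Sch_{\rad}(\R^{24})$ be arbitrary. The interpolation theorem expresses
\begin{equation*}
 f(x)=\sum_{n\geq2}\Bigl(f(\sqrt{2n})a_n(x)+f'(\sqrt{2n})b_n(x)+\widehat f(\sqrt{2n})\widetilde a_n(x)+\widehat f'(\sqrt{2n})\widetilde b_n(x)\Bigr).
\end{equation*}
The defining duality of the basis then gives $a_m(\sqrt{2n})=\delta_{mn}$, $a_m'(\sqrt{2n})=0$, $\widehat a_m(\sqrt{2n})=0$ and $\widehat a_m'(\sqrt{2n})=0$ for all $n\geq2$. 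I will take these as given from the earlier part of the paper (they are exactly the properties used for $a_2$ in Theorem~\ref{thm:main}).

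Next I apply Poisson summation to $a_m$ over $\Leech$. Since $\Leech$ is even unimodular, its dual is itself, it has covolume $1$, and it has no vectors of norm $2$. Hence
\begin{equation*}
 \sum_{x\in\Leech}a_m(x)=\sum_{y\in\Leech}\widehat a_m(y).
\end{equation*}
The nonzero vectors of $\Leech$ lie on the spheres of radius $\sqrt{2n}$ with $n\geq2$. On each such sphere $\widehat a_m$ vanishes, so the right side reduces to $\widehat a_m(0)$. On the left, $a_m$ equals $\delta_{mn}$ on the shell of radius $\sqrt{2n}$, so the left side is $a_m(0)+N_m$. This gives \eqref{eq:shell-interpolation}. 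Absolute convergence is automatic because $a_m$ is Schwartz.

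For \eqref{eq:shell-ramanujan} there are two routes. The direct one uses the classical identity $\Theta_{\Leech}=E_{12}-\frac{65520}{691}\Delta$, which holds because $M_{12}(\mathrm{SL}_2(\Z))$ is spanned by $E_{12}$ and $\Delta$, together with the vanishing of the $q^1$ coefficient. Since $E_{12}=1+\frac{65520}{691}\sum\sigma_{11}(m)q^m$, the stated formula follows. To present this honestly as a consequence of interpolation, I would instead compute $\widehat a_m(0)-a_m(0)$ from the generating kernel of \cite{CKMRV2022}. There $\sum_m a_m(r)e^{\pi i m\tau}$ is built from weakly holomorphic modular forms of weight $-10$, and its values at $r=0$ are the constant terms of these forms. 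The expected main obstacle is identifying these constant-term generating functions with $E_{12}$ and $\Delta$ via the explicit kernel. The fallback that avoids it is to take $f=e^{-\pi t|x|^2}$ in the Poisson identity: this shows that $\sum_m(\widehat a_m(0)-a_m(0))q^m$ and $\Theta_{\Leech}$ agree for all $m$. Combined with the modular-form identity, this again gives \eqref{eq:shell-ramanujan}.
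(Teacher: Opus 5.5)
Your proposal is correct and follows the paper's proof. Poisson summation over the self-dual Leech lattice is applied to $a_m$, whose nodal data kill every nonzero shell except $|x|^2=2m$ on the direct side and every nonzero shell on the Fourier side; this gives \eqref{eq:shell-interpolation}, and the identity $\Theta_{\Leech}=E_{12}-\frac{65520}{691}\Delta$ in $M_{12}(\mathrm{SL}_2(\Z))$ then gives \eqref{eq:shell-ramanujan}. The detours through the generating kernel and the Gaussian are unnecessary, since the Gaussian ``fallback'' only re-derives \eqref{eq:shell-interpolation} coefficientwise; also, the absence of norm-$2$ vectors is a separate property of $\Leech$, not a consequence of its being even unimodular.
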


The proof has three nonformal inputs.  First, Fourier interpolation gives the
basis and exact nodal data.  Second, the dimension-$24$ sphere-packing
function gives the signs of $b_2$ and $\widehat b_2$.  Third, an exact
calculation in the interpolation kernel gives the $S$-cusp asymptotics needed
to control infinity.  The coefficient extraction and every cusp cancellation
used in the third step are displayed and proved below as finite identities in
formal power series; no computer-assisted verification is needed for this
third step.

\section{Fourier interpolation and the sign anchor}

\subsection{The dimension-\texorpdfstring{$24$}{24} interpolation basis}

Let $\Sch_{\rad}(\R^{24})$ denote the radial Schwartz space.  The interpolation
theorem of \cite{CKMRV2022} states that every $h\in\Sch_{\rad}(\R^{24})$ has
the absolutely convergent expansion
\begin{align}
 h(x)
  ={}&\sum_{n=2}^{\infty}h(\sqrt{2n})a_n(x)
     +\sum_{n=2}^{\infty}h'(\sqrt{2n})b_n(x) \notag\\
    &+\sum_{n=2}^{\infty}\widehat h(\sqrt{2n})\widetilde a_n(x)
     +\sum_{n=2}^{\infty}\widehat h'(\sqrt{2n})\widetilde b_n(x).
 \label{eq:interpolation-formula}
\end{align}
The basis functions are characterized by the Kronecker data, for $m,n\geq2$,
\begin{align}
 a_n(\sqrt{2m})&=\delta_{m,n},
 &a_n'(\sqrt{2m})&=0,
 &\widehat a_n(\sqrt{2m})&=0,
 &\widehat a_n'(\sqrt{2m})&=0, \label{eq:a-data}\\
 b_n(\sqrt{2m})&=0,
 &b_n'(\sqrt{2m})&=\delta_{m,n},
 &\widehat b_n(\sqrt{2m})&=0,
 &\widehat b_n'(\sqrt{2m})&=0, \label{eq:b-data}
\end{align}
and $\widetilde a_n=\widehat a_n$ and
$\widetilde b_n=\widehat b_n$.

\begin{lemma}\label{lem:real-basis}
The functions $a_n,b_n,\widehat a_n,\widehat b_n$ are real-valued on
$\R^{24}$.
\end{lemma}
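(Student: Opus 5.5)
The argument is a uniqueness argument built on the interpolation formula \eqref{eq:interpolation-formula}. For any $h\in\Sch_{\rad}(\R^{24})$, set $h^*(x):=\overline{h(x)}$. This is again a radial Schwartz function. Because $h$ is radial, it is even, so
\[
 \widehat{h^*}(y)=\int\overline{h(x)}\,\e^{-2\pi\ii\langle x,y\rangle}\dd x
 =\overline{\widehat h(-y)}=\overline{\widehat h(y)}.
\]
Thus conjugation commutes with the Fourier transform on radial functions. It also commutes with the radial derivative, since the radius is a real variable: $(h^*)'(r)=\overline{h'(r)}$.

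Now take $h=a_n$ and apply \eqref{eq:interpolation-formula} to $a_n^*$. Its interpolation data are the complex conjugates of the data in \eqref{eq:a-data}. Those data are all $0$ or $1$, hence real, so they coincide with the data of $a_n$. Subtracting the two expansions, $a_n^*-a_n$ is a radial Schwartz function all of whose nodal data vanish. By \eqref{eq:interpolation-formula} it is therefore identically zero, i.e.\ $a_n$ is real-valued.

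The same argument with \eqref{eq:b-data} shows that $b_n$ is real-valued. Finally, $\widehat a_n=\widehat{a_n^*}=\overline{\widehat a_n}$, so $\widehat a_n$ is real, and likewise $\widehat b_n$.

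The only delicate point is the uniqueness statement. It follows from the expansion \eqref{eq:interpolation-formula} itself, applied to the difference, because that expansion holds for every radial Schwartz function. No separate injectivity result is needed beyond the interpolation theorem of \cite{CKMRV2022}.
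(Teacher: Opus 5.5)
Your proposal is correct and follows essentially the same route as the paper. Conjugation commutes with the radial derivative and, for even functions, with the Fourier transform, so $\overline{a_n}$ and $\overline{b_n}$ carry the same real $0/1$ nodal data as $a_n$ and $b_n$; uniqueness in the interpolation formula then forces $\overline{a_n}=a_n$ and $\overline{b_n}=b_n$, and reality of $\widehat a_n,\widehat b_n$ follows.
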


\begin{proof}
For every Schwartz function $u$ one has
$\widehat{\overline u}(y)=\overline{\widehat u(-y)}$.  Since radial functions
are even, the conjugates $\overline{a_n}$ and
$\overline{b_n}$ have exactly the same value, derivative, Fourier-value, and
Fourier-derivative interpolation data as $a_n$ and $b_n$, respectively.
Uniqueness in \eqref{eq:interpolation-formula} therefore gives
$\overline{a_n}=a_n$ and $\overline{b_n}=b_n$.  Finally, the Fourier transform
of a real even function is real and even, because
$\overline{\widehat u(y)}=\widehat u(-y)=\widehat u(y)$.
\end{proof}

\subsection{Identification and signs of \texorpdfstring{$b_2$}{b2}}

Let $f_{\mathrm{sp}}$ be the optimal auxiliary function for sphere packing in
$\R^{24}$ constructed in \cite{CKMRV2017}.  The construction extends
Viazovska's modular-form method for $E_8$ \cite{Viazovska2017}.  Its relevant
interpolation data are
\begin{align}
 f_{\mathrm{sp}}(\sqrt{2n})&=0 &&(n\geq2),\label{eq:fsp-values}\\
 f_{\mathrm{sp}}'(2)&=-\frac1{16380},
 &f_{\mathrm{sp}}'(\sqrt{2n})&=0 &&(n\geq3),\label{eq:fsp-derivatives}\\
 \widehat f_{\mathrm{sp}}(\sqrt{2n})&=0,
 &\widehat f_{\mathrm{sp}}'(\sqrt{2n})&=0 &&(n\geq2).
 \label{eq:fsp-fourier-data}
\end{align}
These conditions also appeared in the numerical and structural study of
optimal functions in \cite{CM2016}.

\begin{proposition}[Sign anchor]\label{prop:sign-anchor}
One has
\begin{equation}\label{eq:b2-fsp}
 b_2=-16380 f_{\mathrm{sp}}.
\end{equation}
Furthermore,
\begin{align}
 b_2(r)&>0
 &&\text{for }r>2\text{ and }r^2\notin2\Z, \label{eq:b2-positive}\\
 \widehat b_2(r)&<0
 &&\text{for }r\geq0\text{ and }r^2\notin2\Z_{\geq2}. \label{eq:hatb2-negative}
\end{align}
At $r=\sqrt{2n}$, $n\geq3$, the zeros of $b_2$ are exactly double; at
$r=\sqrt{2n}$, $n\geq2$, the zeros of $\widehat b_2$ are exactly double.
\end{proposition}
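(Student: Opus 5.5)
The identity \eqref{eq:b2-fsp} follows from uniqueness in the interpolation formula \eqref{eq:interpolation-formula}. I will apply \eqref{eq:interpolation-formula} to $h=f_{\mathrm{sp}}$, which is a radial Schwartz function. By \eqref{eq:fsp-values}--\eqref{eq:fsp-fourier-data}, every coefficient vanishes except the one multiplying $b_2$, namely $f_{\mathrm{sp}}'(2)=-1/16380$. Hence $f_{\mathrm{sp}}=-\tfrac1{16380}b_2$, which is \eqref{eq:b2-fsp}. In particular $b_2$ and $f_{\mathrm{sp}}$ have the same zeros with the same multiplicities, and all sign statements reduce to the known sign properties of $f_{\mathrm{sp}}$.

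For the signs I will use the explicit structure of the construction in \cite{CKMRV2017}. There $f_{\mathrm{sp}}$ is an eigenfunction combination $f_{\mathrm{sp}}=c(\phi_+ + \phi_-)$ whose radial profile, for $r>2$, has the form
\[
 f_{\mathrm{sp}}(r)=-K\,\sin^2(\pi r^2/2)\int_0^\infty A(t)\,\e^{-\pi r^2 t}\dd t ,
\]
and $\widehat f_{\mathrm{sp}}$ has an analogous form with $A$ replaced by a kernel $B$. Here $K>0$, and the modular-form kernels $A$ and $B$ were shown in \cite{CKMRV2017} to be positive on $(0,\infty)$. This is exactly the positivity used there to establish $f_{\mathrm{sp}}(r)\le0$ for $r\geq2$ and $\widehat f_{\mathrm{sp}}\geq0$. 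With these representations:
\begin{enumerate}[label=(\roman*)]
\item The Laplace integral is strictly positive, so for $r>2$ with $r^2\notin2\Z$ we get $f_{\mathrm{sp}}(r)<0$. This gives \eqref{eq:b2-positive} after multiplying by $-16380$.
\item The analogous representation gives $\widehat f_{\mathrm{sp}}(r)>0$ for $r>0$ with $r^2\notin2\Z$.
\item At the remaining points, $r^2=2$ and $r=0$, positivity follows from analytic continuation of the same formula: the Laplace integral picks up a pole that compensates the $\sin^2$ zero. Alternatively, $\widehat f_{\mathrm{sp}}(0)=f_{\mathrm{sp}}(0)>0$ follows directly from \cite{CKMRV2017}.
\end{enumerate}
Since $\widehat b_2=-16380\,\widehat f_{\mathrm{sp}}$, this gives \eqref{eq:hatb2-negative}.

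The multiplicity claims follow from the same factorization. The factor $\sin^2(\pi r^2/2)$ vanishes to exactly order two in $r$ at each $r=\sqrt{2n}$, because $\tfrac{\dd}{\dd r}(\pi r^2/2)=\pi r\neq0$ there. The remaining factor is a Laplace transform of a positive function, hence strictly positive and finite for $r>2$ (respectively for all $r>\sqrt2$ on the Fourier side). Thus the zeros of $b_2$ at $\sqrt{2n}$ with $n\geq3$ are exactly double, and likewise for $\widehat b_2$ at $\sqrt{2n}$ with $n\geq3$.

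The main obstacle is the Fourier-side zero at $r=2$ ($n=2$). There the Laplace representation may only converge for $r^2>2$ or may need a correction term. The point $r=2$ is inside the convergence range only if $A$ and $B$ grow at most like $\e^{2\pi t}$, but $B$ may grow like $\e^{4\pi t}$ because of the $q^{-2}$ behaviour in dimension 24. In that case I would subtract the principal part of $B$ at infinity explicitly. This produces an elementary rational-in-$r^2$ correction term whose contribution at $r=2$ I would check is nonzero and of the right sign, so that the zero remains exactly double. An alternative check is the direct second-derivative computation, which uses positivity of the regularized Laplace integral at $r=2$.
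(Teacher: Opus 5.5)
Your identity $b_2=-16380f_{\mathrm{sp}}$ and your direct-side argument match the paper: the Laplace integral has a strict sign for $r>2$, and the exact double zeros come from $\sin^2(\pi r^2/2)$. One small point: \cite{CKMRV2017} only gives the kernels a nonstrict sign ($A\le0$, $B\ge0$). Strictness of the integrals should therefore come from continuity together with $A,B\not\equiv0$, as the paper does.

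The genuine gap is the Fourier side on the open interval $0<r<\sqrt2$. In step (ii) you use the Laplace representation of $\widehat f_{\mathrm{sp}}$ for every $r>0$. But $B(t)$ grows like $\frac1{39}t\e^{2\pi t}$, so that integral converges only for $r^2>2$. Below $\sqrt2$ you must use the continued formula \eqref{eq:fsp-low-frequency-continuation}. Subtracting $\frac1{39}t\e^{2\pi t}-\frac{10}{117\pi}\e^{2\pi t}$ on $[1,\infty)$ produces the explicit term $P(r)=\frac{(10-3\pi)(2-r^2)+3}{117\pi^2(r^2-2)^2}\e^{-\pi(r^2-2)}$. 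Positivity on $(0,\sqrt2)$ does not follow from $B\ge0$. It needs two further inputs:
\begin{enumerate}[label=(\alph*)]
\item the separate inequality $B(t)-t\e^{2\pi t}/39+10\e^{2\pi t}/(117\pi)\ge0$ for $t\ge1$ from \cite{CKMRV2017};
\item a sign check on $P$, which rests on $10-3\pi>0$ (the paper proves $\pi<22/7$).
\end{enumerate}
Your step (iii) invokes continuation only at the isolated points $r^2=2$ and $r=0$, so the whole interval remains unproved. At $r=\sqrt2$ your heuristic that the pole compensates the zero is correct: the value is $\frac{3}{117\pi^2}\cdot\frac{\pi^2}{4}=\frac1{156}$. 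However, asserting it requires knowing the sign of that double-pole coefficient. The paper instead just uses $\widehat f_{\mathrm{sp}}(\sqrt2)=1/156$ and $\widehat f_{\mathrm{sp}}(0)=1$.

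The obstacle you single out at $r=2$ is not actually one. The $\e^{4\pi t}$ growth you attribute to the $q^{-2}$ term belongs to the direct-side kernel, which is why that representation needs $r>2$. On the Fourier side the growth is only polynomial times $\e^{2\pi t}$, so the representation is valid for all $r>\sqrt2$. The exact double zero of $\widehat b_2$ at $r=2$ therefore follows exactly as for $n\ge3$, with no correction term. Note also that your fallback contradicts step (ii): if $B$ really grew like $\e^{4\pi t}$, step (ii) would fail on all of $(0,2)$, not just at $r=2$.
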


\begin{proof}
Equations \eqref{eq:fsp-values}--\eqref{eq:fsp-fourier-data} and the
interpolation formula \eqref{eq:interpolation-formula} give
$f_{\mathrm{sp}}=-b_2/16380$, which proves \eqref{eq:b2-fsp}.

For $r>2$, the direct-side formula in \cite{CKMRV2017} is
\begin{equation}\label{eq:fsp-direct-integral}
 f_{\mathrm{sp}}(r)
 =\sin^2\!\left(\frac{\pi r^2}{2}\right)
   \int_0^\infty A(t)\e^{-\pi r^2t}\dd t,
\end{equation}
where $A$ is continuous on $(0,\infty)$, satisfies $A(t)\leq0$, and is not
identically zero.  Hence $A(t_0)<0$ for some $t_0>0$, and continuity gives an
open interval on which $A<0$.  Since the Laplace kernel is strictly positive,
$\int_0^\infty A(t)\e^{-\pi r^2t}\dd t<0$ for every $r>2$.  Thus
$f_{\mathrm{sp}}(r)<0$ away from the zeros of the sine-square factor.  Those
zeros are exactly double, and the integral never vanishes; therefore the zeros of
$f_{\mathrm{sp}}$ at $r=\sqrt{2n}$, $n\geq3$, are exactly double.
Multiplication by $-16380$ proves \eqref{eq:b2-positive} and the corresponding
multiplicity assertion.

For $r>\sqrt2$, the Fourier-side formula is
\begin{equation}\label{eq:fsp-fourier-integral}
 \widehat f_{\mathrm{sp}}(r)
 =\sin^2\!\left(\frac{\pi r^2}{2}\right)
   \int_0^\infty B(t)\e^{-\pi r^2t}\dd t,
\end{equation}
where $B$ is continuous, nonnegative, and not identically zero.  The same
argument shows that the integral is strictly positive throughout its
convergence range.  It follows that $\widehat f_{\mathrm{sp}}$ is strictly
positive away from the sine-square zeros for $r>\sqrt2$, and that its zeros at
$r=\sqrt{2n}$, $n\geq2$, are exactly double.

It remains only to cover $0\leq r\leq\sqrt2$, where
\eqref{eq:fsp-fourier-integral} does not converge throughout the interval.
The continuation argument in \cite{CKMRV2017} proves, for
$0<r<\sqrt2$, the exact identity
\begin{align}
 \widehat f_{\mathrm{sp}}(r)
 ={}&\sin^2\!\left(\frac{\pi r^2}{2}\right)\Biggl[
 \int_0^1 B(t)\e^{-\pi r^2t}\dd t \notag\\
 &+\int_1^\infty\left(
 B(t)-\frac{1}{39}t\e^{2\pi t}
 +\frac{10}{117\pi}\e^{2\pi t}\right)
 \e^{-\pi r^2t}\dd t+P(r)\Biggr],
 \label{eq:fsp-low-frequency-continuation}
\end{align}
where
\begin{equation*}
 P(r):=\frac{(10-3\pi)(2-r^2)+3}{117\pi^2(r^2-2)^2}
 \e^{-\pi(r^2-2)}.
\end{equation*}
The inequalities proved there give $B(t)\geq0$ for $t>0$ and
$B(t)-t\e^{2\pi t}/39+10\e^{2\pi t}/(117\pi)\geq0$ for $t\geq1$.
Thus both integrals in
\eqref{eq:fsp-low-frequency-continuation} are nonnegative.  The remaining
term $P(r)$ is strictly positive.  Indeed, polynomial division gives
$\frac{x^4(1-x)^4}{1+x^2}=x^6-4x^5+5x^4-4x^2+4-\frac4{1+x^2}$.
Integration over $[0,1]$ yields
$0<\int_0^1x^4(1-x)^4/(1+x^2)\dd x=22/7-\pi$; hence
$\pi<22/7<10/3$, and therefore $10-3\pi>0$.  Since also
$2-r^2>0$ and $\sin^2(\pi r^2/2)>0$ for $0<r<\sqrt2$,
\eqref{eq:fsp-low-frequency-continuation} gives
$\widehat f_{\mathrm{sp}}(r)>0$ throughout this interval.  Finally,
$\widehat f_{\mathrm{sp}}(0)=1$ and
$\widehat f_{\mathrm{sp}}(\sqrt2)=1/156$.  Hence
$\widehat f_{\mathrm{sp}}(r)>0$ for every $r\geq0$ except the prescribed
zeros $\sqrt{2n}$, $n\geq2$.  Multiplication by $-16380$ proves
\eqref{eq:hatb2-negative}.
\end{proof}

\section{The affine reduction}

The nodal data \eqref{eq:a-data}--\eqref{eq:b-data} immediately yield
\begin{align}
 g_C(2)&=1, \label{eq:gC-at-2}\\
 g_C(\sqrt{2n})=g_C'(\sqrt{2n})&=0 &&(n\geq3),
 \label{eq:gC-direct-nodes}\\
 \widehat g_C(\sqrt{2n})=\widehat g_C'(\sqrt{2n})&=0 &&(n\geq2).
 \label{eq:gC-fourier-nodes}
\end{align}
Thus all interpolation constraints needed by the Leech lattice are automatic.
Only the signs between the nodes remain.

\begin{lemma}[Removable quotient singularities]\label{lem:quotient-extension}
The quotients in \eqref{eq:R-definitions} extend uniquely to smooth real-valued
functions on $[\sqrt6,\infty)$ and $[0,\infty)$, respectively, with one-sided
smoothness at the left endpoints.
\end{lemma}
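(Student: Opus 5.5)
The plan is to work in the variable $s=r^2$. Every function involved is radial and Schwartz, so its profile is a smooth even function of $r$; by Whitney's theorem on even functions it is therefore a smooth function of $s$ on $[0,\infty)$. Since $r\mapsto r^2$ is a diffeomorphism on $r>0$, and the squaring map preserves smoothness at $r=0$, it suffices to prove smoothness of the quotients as functions of $s$. Write $\beta(s)=b_2(\sqrt s)$ and $\alpha(s)=a_2(\sqrt s)$, and similarly $\widehat\beta,\widehat\alpha$.

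By Proposition~\ref{prop:sign-anchor}, the denominator $b_2$ is strictly positive on $r\ge\sqrt6$ except at the points $r=\sqrt{2n}$ with $n\ge3$. In the $s$-variable these are $s=2n$, and there $\beta$ has zeros of exact order two, since $ds/dr=2r\neq0$. Likewise $\widehat b_2$ is strictly negative on $r\ge0$ except at $s=2n$ with $n\ge2$, where it has exact double zeros; note that $s=0$ is not a zero because $\widehat f_{\mathrm{sp}}(0)=1$. The nodal data \eqref{eq:a-data} give $a_2(\sqrt{2m})=a_2'(\sqrt{2m})=0$ for $m\ge3$, and $\widehat a_2(\sqrt{2m})=\widehat a_2'(\sqrt{2m})=0$ for $m\ge2$. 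So at each zero $s_0$ of the denominator, the numerator vanishes to order at least two as well.

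The local step uses Taylor's theorem with integral remainder (Hadamard's lemma) applied twice. It gives $\alpha(s)=(s-s_0)^2\alpha_1(s)$ and $\beta(s)=(s-s_0)^2\beta_1(s)$, with $\alpha_1,\beta_1$ smooth and $\beta_1(s_0)=\tfrac12\beta''(s_0)\neq0$ by exactness of the double zero. Hence, near $s_0$, the quotient $\alpha/\beta=\alpha_1/\beta_1$ is smooth. Away from these points the denominator is nonvanishing and the quotient is smooth.

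These local extensions agree with the original quotient on punctured neighbourhoods. The zero set $\{2n\}$ is discrete, so the pieces glue to a global smooth function, and uniqueness follows from density of the complement. At the left endpoint $r=\sqrt6$ (where $s=6$ is itself a double zero) the same Hadamard factorization works one-sidedly, because $b_2$ and $a_2$ are smooth on a full neighbourhood. Real-valuedness comes from Lemma~\ref{lem:real-basis}.

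The only delicate point is making sure the exact order of vanishing of the denominator is transported correctly to the $s$-variable and that the numerator's vanishing is at least that order. Both follow directly from the stated double-zero assertions together with \eqref{eq:a-data}. Boundedness at infinity is not part of this lemma.
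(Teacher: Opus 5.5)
Your proof is correct and is essentially the paper's argument: a Hadamard factorization at each node, using the exact double zeros of $b_2$ and $\widehat b_2$ from \cref{prop:sign-anchor} and the second-order vanishing of the numerators from \eqref{eq:a-data}, followed by patching over the discrete node set. The change to $s=r^2$ and the appeal to Whitney's theorem are harmless but unnecessary: all nodes lie at $r>0$ and the Fourier denominator is nonzero at $r=0$, so the paper simply works directly in $r$.
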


\begin{proof}
Fix a direct-side node $r_n=\sqrt{2n}$ with $n\geq3$.  By
\eqref{eq:a-data}, $a_2(r_n)=a_2'(r_n)=0$, while
\cref{prop:sign-anchor} says that $b_2$ has an exact double zero at $r_n$.
The smooth Hadamard lemma therefore gives smooth functions $A_n,B_n$ near
$r_n$ such that
\begin{equation}\label{eq:Hadamard-direct}
 a_2(r)=(r-r_n)^2A_n(r),\qquad
 b_2(r)=(r-r_n)^2B_n(r),\qquad B_n(r_n)\neq0.
\end{equation}
Hence $a_2/b_2=A_n/B_n$ extends smoothly across $r_n$.  There are no other
zeros of $b_2$ on the direct-side domain, and the nodes have no finite
accumulation point, so these local extensions patch uniquely.

The same argument applies on the Fourier side: \eqref{eq:a-data} gives
$\widehat a_2(r_n)=\widehat a_2'(r_n)=0$ for $n\geq2$, whereas
\cref{prop:sign-anchor} gives an exact double zero of $\widehat b_2$ and no
additional denominator zeros on $[0,\infty)$.  At $r=0$ the denominator is
nonzero.  Thus the Fourier quotient has a unique smooth real-valued extension.
\end{proof}

The explicit integral kernels used later also make the extension values
transparent.  For $r^2>4$, define
\begin{equation}\label{eq:I-alpha-beta}
 I_\alpha(r):=\int_0^\infty\alpha_2(\ii t)\e^{-\pi r^2t}\dd t,
 \qquad
 I_\beta(r):=\int_0^\infty\beta_2(\ii t)\e^{-\pi r^2t}\dd t,
\end{equation}
and, for $r^2>2$, define
\begin{equation}\label{eq:I-tilded-alpha-beta}
 \widetilde I_\alpha(r):=\int_0^\infty\widetilde\alpha_2(\ii t)
 \e^{-\pi r^2t}\dd t,
 \qquad
 \widetilde I_\beta(r):=\int_0^\infty\widetilde\beta_2(\ii t)
 \e^{-\pi r^2t}\dd t.
\end{equation}
The cusp estimates and large-$t$ bounds established in \cref{sec:laplace}
justify dominated convergence, and hence these four integrals are smooth in
$r$ on their stated domains.  The integral formulas proved there contain the local factor
$4\sin^2(\pi r^2/2)=4\pi^2r_n^2(r-r_n)^2+\Oh((r-r_n)^3)$.
Consequently, whenever $n\geq3$ on the direct side and $n\geq2$ on the
Fourier side,
\begin{align}
 a_2''(r_n)&=8\pi^2r_n^2 I_\alpha(r_n),
 &b_2''(r_n)&=8\pi^2r_n^2 I_\beta(r_n)\neq0,\notag\\
 R(r_n)&=\frac{I_\alpha(r_n)}{I_\beta(r_n)}
       =\frac{a_2''(r_n)}{b_2''(r_n)},
 \label{eq:R-node-value}\\
 \widehat a_2''(r_n)&=8\pi^2r_n^2\widetilde I_\alpha(r_n),
 &\widehat b_2''(r_n)&=8\pi^2r_n^2\widetilde I_\beta(r_n)\neq0,\notag\\
 \widehat R(r_n)&=\frac{\widetilde I_\alpha(r_n)}{\widetilde I_\beta(r_n)}
 =\frac{\widehat a_2''(r_n)}{\widehat b_2''(r_n)}.
 \label{eq:Rhat-node-value}
\end{align}

\begin{proposition}[Sign reduction]\label{prop:sign-reduction}
Assume that $R$ and $\widehat R$ are bounded above on their respective domains.
Then $C_*$ in \eqref{eq:Cstar-definition} is finite, and for every $C\geq C_*$,
\begin{align}
 g_C(r)&\leq0 &&(r\geq\sqrt6),\label{eq:reduced-direct-sign}\\
 \widehat g_C(r)&\geq0 &&(r\geq0).
 \label{eq:reduced-fourier-sign}
\end{align}
Conversely, if $C<C_*$, at least one of these sign conditions fails.
\end{proposition}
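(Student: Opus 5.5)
Finiteness of $C_*$ follows from the hypothesis together with \cref{lem:quotient-extension}. $R$ and $\widehat R$ are real-valued continuous functions on nonempty domains and are bounded above, so both suprema are finite real numbers. Hence $C_*$ is a finite real number.

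For the sign conditions, factor through the denominators. On the direct side, fix $C\geq C_*$ and write, for $r\geq\sqrt6$,
\[
 g_C(r)=a_2(r)-Cb_2(r)=b_2(r)\bigl(R(r)-C\bigr).
\]
This holds first away from the nodes $\sqrt{2n}$, $n\geq3$, by the definition \eqref{eq:R-definitions}. At the nodes it extends by continuity, since both sides are continuous (the right side via the smooth extension of $R$) and the nodes are isolated. Alternatively, both sides vanish there by \eqref{eq:a-data} and \eqref{eq:b-data}.

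By \eqref{eq:b2-positive}, $b_2\geq0$ on $[\sqrt6,\infty)$, with strict inequality off the nodes. Since $R(r)\leq C_*\leq C$, we get $g_C(r)\leq0$, which is \eqref{eq:reduced-direct-sign}.

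On the Fourier side the argument is identical:
\[
 \widehat g_C=\widehat b_2\bigl(\widehat R-C\bigr),
\]
with $\widehat b_2\leq0$ by \eqref{eq:hatb2-negative} and $\widehat R-C\leq0$. The product is therefore $\geq0$, giving \eqref{eq:reduced-fourier-sign}. The Fourier transform of $g_C$ is $\widehat a_2-C\widehat b_2$ by linearity, and this is real by \cref{lem:real-basis}.

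For the converse, suppose $C<C_*$. Then either $\sup R>C$ or $\sup\widehat R>C$. Take the first case; the second is symmetric. There is some $r_0\geq\sqrt6$ with $R(r_0)>C$. By continuity of $R$, the inequality $R>C$ holds on an open interval around $r_0$ (one-sided if $r_0=\sqrt6$). This interval has positive length, so it contains a point $r_1$ with $r_1^2\notin2\Z$. At $r_1$ we have $b_2(r_1)>0$ strictly, so
\[
 g_C(r_1)=b_2(r_1)\bigl(R(r_1)-C\bigr)>0,
\]
violating \eqref{eq:reduced-direct-sign}. In the Fourier case, $\widehat b_2(r_1)<0$ strictly and $\widehat R(r_1)>C$ give $\widehat g_C(r_1)<0$.

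The only delicate point is this: a supremum exceeding $C$ must be witnessed at a non-node point, where the denominator has strict sign. Continuity of the smooth extension, combined with the discreteness of the nodes, handles it. The strict sign information comes from \cref{prop:sign-anchor}, which is what makes the converse exact rather than just an inequality.
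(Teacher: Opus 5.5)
Your proposal is correct and follows essentially the same route as the paper. Both factor $g_C=b_2(R-C)$ and $\widehat g_C=\widehat b_2(\widehat R-C)$ away from the nodes, use the strict signs from \cref{prop:sign-anchor} (with $g_C$ vanishing at the nodes), and prove the converse by using continuity of the extended quotient to move a witness $R(r_0)>C$ or $\widehat R(r_0)>C$ to a nearby non-node point. Your explicit argument that the suprema are finite is a small detail the paper leaves implicit.
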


\begin{proof}
Away from interpolation nodes, $g_C(r)=b_2(r)(R(r)-C)$.  On
$r\geq\sqrt6$, \cref{prop:sign-anchor} gives $b_2(r)>0$ away from the
nodes.  Thus $C\geq\sup R$ implies \eqref{eq:reduced-direct-sign}; at each
node $g_C$ vanishes by \eqref{eq:gC-direct-nodes}.

Similarly,
$\widehat g_C(r)=\widehat b_2(r)(\widehat R(r)-C)$, and
$\widehat b_2(r)<0$ away from its nodes.  Hence
$C\geq\sup\widehat R$ implies \eqref{eq:reduced-fourier-sign}.

If $C<C_*$, then either $R(r_0)>C$ at some point of the direct domain or
$\widehat R(r_0)>C$ at some point of the Fourier domain.  If $r_0$ is a node,
continuity gives the same strict inequality at nearby nonnodes.  The
corresponding factorization then gives the sign violation.
\end{proof}

It remains to prove boundedness.  Compact subsets are handled by
\cref{lem:quotient-extension}; the sole issue is $r\to\infty$.  The next
sections compute the common limit and its first correction.

\section{Exact extraction of the \texorpdfstring{$n=2$}{n=2} kernel coefficients}
\label{sec:kernel}

\subsection{Modular notation}

Write $q=\e^{2\pi\ii z}$ and $x=\e^{\pi\ii z}$, and let
\begin{align}
 E_2(z)&=1-24\sum_{n\geq1}\sigma_1(n)q^n,\label{eq:E2}\\
 E_4(z)&=1+240\sum_{n\geq1}\sigma_3(n)q^n,\label{eq:E4}\\
 E_6(z)&=1-504\sum_{n\geq1}\sigma_5(n)q^n,\label{eq:E6}\\
 \Delta(z)&=\frac{E_4(z)^3-E_6(z)^2}{1728},
 &j(z)&=\frac{E_4(z)^3}{\Delta(z)}.\label{eq:Delta-j}
\end{align}
We also use $U=\theta_3^4$, $V=\theta_2^4$, and $W=\theta_4^4$; then
$U=V+W$ and $\Delta=(UVW)^2/256$.  Set $\lambda=V/U$,
$L=\log\lambda$, and $L_S=\log(1-\lambda)=\log(W/U)$, with the branches
used in \cite{CKMRV2022}.  On the positive imaginary axis,
$L_S$ is the ordinary real logarithm of $W/U$.

For even $k$, let
\begin{equation}\label{eq:f-list}
 (f_{-2},f_0,f_2,f_4,f_6,f_8,f_{10},f_{12},f_{14})
 =\left(\frac{E_{10}}{\Delta},1,\frac{E_{14}}{\Delta},
 E_4,E_6,E_8,E_{10},\Delta,E_{14}\right),
\end{equation}
where $E_8=E_4^2$, $E_{10}=E_4E_6$, and $E_{14}=E_4^2E_6$, and put
$\Pi_{a,b,c}:=\diag(f_a,f_b,f_c)$.  Let $N=1728$ and
$j_{\tau,z}=j(\tau)-j(z)$.

The three plus-basis functions in the $\tau$ variable are
\begin{equation}\label{eq:plus-basis-tau}
 \varphi_{-2}(\tau)=\tau,
 \qquad
 \varphi_0(\tau)=\tau E_2(\tau)-\frac{3\ii}{\pi},
 \qquad
 \varphi_2(\tau)=\tau E_2(\tau)^2-\frac{6\ii}{\pi}E_2(\tau),
\end{equation}
and in the $z$ variable
\begin{equation}\label{eq:plus-basis-z}
 \widetilde\varphi_{-2}(z)=z^2,
 \quad
 \widetilde\varphi_0(z)=z^2(E_2|_2S)(z),
 \quad
 \widetilde\varphi_2(z)=z^2(E_2|_2S)(z)^2,
\end{equation}
where $S:z\mapsto-1/z$ and $(F|_kS)(z)=z^{-k}F(-1/z)$.

The minus-basis functions are
\begin{align}
 \psi_0&=1,\label{eq:psi0}\\
 \psi_2&=(U+W)L+(-U-V)L_S,\label{eq:psi2}\\
 \psi_4&=(U^2+W^2-2V^2)L+(U^2+V^2-2W^2)L_S.\label{eq:psi4}
\end{align}
The corresponding tilded functions are $\widetilde\psi_0=L$,
$\widetilde\psi_2=W$, and $\widetilde\psi_4=U^2-V^2$.

\subsection{The generating kernels}

For $d=24$, the explicit coefficient matrices in \cite{CKMRV2022} are
\begin{align}
 \Upsilon_+^{(24)}(\tau,z)
 ={}&\frac{\Pi_{14,12,10}(\tau)}{36N\pi^{-2}\ii\Delta(z)}
 \begin{pmatrix}
 6&0&N/j_{\tau,z}-6\\
 -12j(\tau)+5N&-2N/j_{\tau,z}&12j(\tau)-7N\\
 N/j_{\tau,z}+6&0&-6
 \end{pmatrix}
 \Pi_{4,2,0}(z),
 \label{eq:Upsilon-plus}\\[3mm]
 \Upsilon_-^{(24)}(\tau,z)
 ={}&\frac{\Pi_{12,10,8}(\tau)}{2N\pi\Delta(z)j_{\tau,z}}
 \begin{pmatrix}
 -2N&-2Nj_{\tau,z}&0\\
 0&j(\tau)+2j_{\tau,z}&-1\\
 0&N-2j_{\tau,z}-j(\tau)&1
 \end{pmatrix}
 \Pi_{2,0,-2}(z).
 \label{eq:Upsilon-minus}
\end{align}
Define
\begin{align}
 K_+(\tau,z)
 &=(\varphi_{-2},\varphi_0,\varphi_2)(\tau)
   \Upsilon_+^{(24)}(\tau,z)
   \begin{pmatrix}\widetilde\varphi_{-2}\\
                   \widetilde\varphi_0\\
                   \widetilde\varphi_2\end{pmatrix}(z),
 \label{eq:K-plus}\\
 K_-(\tau,z)
 &=(\psi_0,\psi_2,\psi_4)(\tau)
   \Upsilon_-^{(24)}(\tau,z)
   \begin{pmatrix}\widetilde\psi_0\\
                   \widetilde\psi_2\\
                   \widetilde\psi_4\end{pmatrix}(z),
 \label{eq:K-minus}\\
 K&=\frac{K_++K_-}{2},
 &\widehat K&=\frac{K_+-K_-}{2}.
 \label{eq:K-and-Khat}
\end{align}
Their $\tau$-expansions have the form
\begin{align}
 K(\tau,z)
 &=\sum_{n\geq2}\alpha_n(z)\e^{2\pi\ii n\tau}
 +2\pi\ii\tau\sum_{n\geq2}\sqrt{2n}\,\beta_n(z)
   \e^{2\pi\ii n\tau},
 \label{eq:K-expansion}\\
 \widehat K(\tau,z)
 &=\sum_{n\geq2}\widetilde\alpha_n(z)\e^{2\pi\ii n\tau}
 +2\pi\ii\tau\sum_{n\geq2}\sqrt{2n}\,\widetilde\beta_n(z)
   \e^{2\pi\ii n\tau}.
 \label{eq:Khat-expansion}
\end{align}
For $n=2$, the coefficient of $\e^{4\pi\ii\tau}$ in
\eqref{eq:K-expansion} is $\alpha_2(z)+4\pi\ii\tau\,\beta_2(z)$, and
analogously for the tilded pair.

\subsection{Finite \texorpdfstring{$\tau$}{tau}-side algebra}

Put $X=\e^{\pi\ii\tau}$ and $T=\pi\ii\tau$.  For the coefficient extraction
below, we retain the level-two forms through the orders displayed:
\begin{align}
 U(\tau)
 &=1+8X+24X^2+32X^3+24X^4+48X^5+96X^6
   +64X^7+\Oh(X^8),\label{eq:U-tau-series}\\
 V(\tau)
 &=16X+64X^3+96X^5+128X^7+\Oh(X^9),
 \label{eq:V-tau-series}\\
 W(\tau)
 &=1-8X+24X^2-32X^3+24X^4-48X^5+96X^6
   -64X^7+\Oh(X^8).
 \label{eq:W-tau-series}
\end{align}
The level-one series are
\begin{align}
 E_2(\tau)&=1-24X^2-72X^4-96X^6+\Oh(X^8),
 \label{eq:E2-tau-series}\\
 E_4(\tau)&=1+240X^2+2160X^4+6720X^6+\Oh(X^8),
 \label{eq:E4-tau-series}\\
 E_6(\tau)&=1-504X^2-16632X^4-122976X^6+\Oh(X^8),
 \label{eq:E6-tau-series}\\
 E_8(\tau)&=1+480X^2+61920X^4+1050240X^6+\Oh(X^8),
 \label{eq:E8-tau-series}\\
 E_{10}(\tau)&=1-264X^2-135432X^4-5196576X^6+\Oh(X^8),
 \label{eq:E10-tau-series}\\
 E_{14}(\tau)&=1-24X^2-196632X^4-38263776X^6+\Oh(X^8),
 \label{eq:E14-tau-series}\\
 \Delta(\tau)&=X^2-24X^4+252X^6+\Oh(X^8),
 \label{eq:Delta-tau-series}\\
 j(\tau)&=X^{-2}+744+196884X^2+21493760X^4
 +864299970X^6+\Oh(X^8).
 \label{eq:j-tau-series}
\end{align}
The coefficient $864299970$ is fixed by the identity $\Delta j=E_4^3$.
To determine it one must retain
$\Delta=X^2-24X^4+252X^6-1472X^8+4830X^{10}+\Oh(X^{12})$; in particular,
the term $4830X^{10}$ contributes to $[X^6]j$.  The logarithmic functions have
the branch-sensitive expansions
\begin{align}
 L(\tau)
 ={}&T+4\log2-8X+12X^2-\frac{32}{3}X^3+6X^4
 -\frac{48}{5}X^5+16X^6+\Oh(X^7),
 \label{eq:L-tau-series}\\
 L_S(\tau)
 ={}&-16X-\frac{64}{3}X^3-\frac{96}{5}X^5
 -\frac{128}{7}X^7+\Oh(X^9).
 \label{eq:LS-tau-series}
\end{align}
In particular, formal inversion gives
\begin{equation}\label{eq:inverse-jtauz-short}
 \frac1{j(\tau)-J}
 =X^2+(J-744)X^4+(J^2-1488J+356652)X^6+\Oh(X^8).
\end{equation}
The minus-basis combinations simplify to even series:
\begin{align}
 \psi_2(\tau)
 ={}&2T+8\log2
 +(48T+408+192\log2)X^2 \notag\\
 &+(48T+2636+192\log2)X^4+\Oh(X^6),
 \label{eq:psi2-short-series}\\
 \psi_4(\tau)
 ={}&2T+8\log2
 +(-288T-744-1152\log2)X^2 \notag\\
 &+(-1824T-25972-7296\log2)X^4+\Oh(X^6).
 \label{eq:psi4-short-series}
\end{align}
Equations \eqref{eq:plus-basis-tau} and \eqref{eq:E2-tau-series} show that
every plus-basis entry is affine in $T$.
Together with \eqref{eq:psi2-short-series}--\eqref{eq:psi4-short-series},
this proves that the $X^4$ coefficients of both $K_+$ and $K_-$, and hence of
$K$ and $\widehat K$, are affine in $T$, as required by the coefficient
normalization stated above.

These truncations are sufficient for a valuation reason.  Every occurrence of
$(j(\tau)-J)^{-1}$ begins with $X^2$, while the sole negative power among the
remaining $\tau$-dependent factors is the leading term $X^{-2}$ of $j(\tau)$.
Consequently an input term of degree greater than $6$ cannot contribute to the
coefficient of $X^4$ in either matrix product in
\eqref{eq:Upsilon-plus}--\eqref{eq:Upsilon-minus}.  Thus the extraction below
is an identity in truncated formal power series, rather than an asymptotic
approximation.

Introduce the $z$-dependent abbreviations $D=\Delta(z)$, $J=j(z)$,
$P_{-2}=z^2$, $P_0=z^2(E_2|_2S)(z)$, and
$P_2=z^2(E_2|_2S)(z)^2$, together with $S_0=L(z)$, $S_2=W(z)$, and
$S_4=U(z)^2-V(z)^2$.

\begin{proposition}[Exact $n=2$ coefficient extraction]\label{prop:kernel-coefficients}
The four coefficient functions in
\eqref{eq:K-expansion}--\eqref{eq:Khat-expansion} are
\begin{align}
\alpha_2(z)={}&-\frac{1}{12\pi D^2}\Bigl[
 \pi^2DE_4JP_{-2}-2163\pi^2DE_4P_{-2}
 -16DJS_2\log2-2DJS_2 \notag\\
&\hspace{12mm}+1155\pi^2DP_2+640DS_2+15360DS_2\log2
 -8E_{10}S_4\log2+2E_{10}S_4 \notag\\
&\hspace{12mm}-\pi^2E_{14}P_0+6E_{14}S_0\Bigr],
\label{eq:alpha2-explicit}\\[1mm]
\beta_2(z)={}&-\frac{1}{288\pi D^2}\Bigl[
 \pi^2DE_4JP_{-2}-3528\pi^2DE_4P_{-2}
 +\pi^2DJP_2-24DJS_2 \notag\\
&\hspace{12mm}+1800\pi^2DP_2+23040DS_2
 -12E_{10}S_4-2\pi^2E_{14}P_0\Bigr],
\label{eq:beta2-explicit}\\[1mm]
\widetilde\alpha_2(z)={}&-\frac{1}{12\pi D^2}\Bigl[
 \pi^2DE_4JP_{-2}-2163\pi^2DE_4P_{-2}
 +2DJS_2+16DJS_2\log2 \notag\\
&\hspace{12mm}+1155\pi^2DP_2-15360DS_2\log2-640DS_2
 -2E_{10}S_4+8E_{10}S_4\log2 \notag\\
&\hspace{12mm}-\pi^2E_{14}P_0-6E_{14}S_0\Bigr],
\label{eq:alphatilde2-explicit}\\[1mm]
\widetilde\beta_2(z)={}&-\frac{1}{288\pi D^2}\Bigl[
 \pi^2DE_4JP_{-2}-3528\pi^2DE_4P_{-2}
 +\pi^2DJP_2+24DJS_2 \notag\\
&\hspace{12mm}+1800\pi^2DP_2-23040DS_2
 +12E_{10}S_4-2\pi^2E_{14}P_0\Bigr].
\label{eq:betatilde2-explicit}
\end{align}
Here every modular form in the brackets is evaluated at $z$.
\end{proposition}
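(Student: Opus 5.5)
The proof is a direct coefficient extraction. We substitute the truncated $\tau$-series \eqref{eq:U-tau-series}--\eqref{eq:LS-tau-series} into the kernels \eqref{eq:K-plus}--\eqref{eq:K-minus} and read off the coefficient of $X^4=\e^{4\pi\ii\tau}$. By the valuation remark preceding the proposition, this is a finite identity in truncated formal power series in $X$, with coefficients polynomial in $T$, $\log2$, $\pi^{\pm1}$ and the $z$-quantities $D,J,E_4,E_{10},E_{14},P_{-2},P_0,P_2,S_0,S_2,S_4$. We treat $K_+$ and $K_-$ separately. In each case we write the coefficient as $c^{(0)}_\pm(z)+T\,c^{(1)}_\pm(z)$. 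Then, since $2\pi\ii\tau\sqrt4=4T$, we set
\[
\alpha_2=\tfrac12(c_+^{(0)}+c_-^{(0)}),\qquad \beta_2=\tfrac18(c_+^{(1)}+c_-^{(1)}),
\]
and the tilded functions are obtained in the same way with the sign of the $K_-$ contribution reversed.

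\textbf{Plus part.} Write $\tau=T/(\pi\ii)$. Then $\varphi_{-2},\varphi_0,\varphi_2$ are affine in $T$, with $X$-series coming from $E_2$. Expand $\Pi_{14,12,10}(\tau)=\diag(E_{14},\Delta,E_{10})(\tau)$, and expand $j(\tau)$ and $1/j_{\tau,z}$ via \eqref{eq:j-tau-series} and \eqref{eq:inverse-jtauz-short}. Next form the row vector $(\varphi_{-2},\varphi_0,\varphi_2)\Pi_{14,12,10}$ times the matrix, entry by entry, keeping terms through $X^4$. The entries $-12j(\tau)+5N$ and $12j(\tau)-7N$ carry $X^{-2}$, but they are multiplied by $\Delta(\tau)=X^2+\cdots$, so the valuation count closes. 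Finally multiply on the right by $\Pi_{4,2,0}(z)(P_{-2},P_0,P_2)^T=(E_4P_{-2},\,E_{14}D^{-1}P_0,\,P_2)$. This produces the terms with $P_{-2},P_0,P_2$ and the factors $\pi^2/(D^2)$ after clearing the prefactor $\pi^2/(36N\ii D)$. The $E_{14}P_0/D^2$ term arises exactly because $f_2=E_{14}/\Delta$.

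\textbf{Minus part.} Use \eqref{eq:psi2-short-series}--\eqref{eq:psi4-short-series}, together with $\psi_0=1$ and $\Pi_{12,10,8}(\tau)=\diag(\Delta,E_{10},E_8)(\tau)$. The overall factor $1/j_{\tau,z}$ supplies $X^2$, which is why only $\psi$-coefficients through $X^4$ are needed. Multiply on the right by $\Pi_{2,0,-2}(z)(S_0,S_2,S_4)^T=(E_{14}D^{-1}S_0,\,S_2,\,E_{10}D^{-1}S_4)$. This yields the $S$-terms, with $\log2$ entering only through the constant $8\log2$ and $192\log2$-type coefficients of $\psi_2$ and $\psi_4$. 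The sign flip between $K$ and $\widehat K$ then explains why $\alpha_2$ and $\widetilde\alpha_2$ differ exactly in the signs of their $S$-terms, and likewise for $\beta_2$ and $\widetilde\beta_2$. This structural check, that the plus-contributions are shared and the minus-contributions are antisymmetric, is the first consistency test I would run on the output.

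\textbf{Main obstacle.} The main obstacle is bookkeeping rather than ideas. The entries involving $N/j_{\tau,z}$ in $\Upsilon_+$ and $\Upsilon_-$ require the expansion \eqref{eq:inverse-jtauz-short} to order $X^6$ multiplied against series starting at $X^{-2}$ or $X^0$. Errors in the constants $356652$ or $864299970$ would propagate into the numerical coefficients such as $2163$, $3528$, $1155$, $1800$, $15360$ and $23040$. I would organize the computation by the $X$-degree in which each product first contributes. I would also verify the final formulas against two independent checks:
\begin{itemize}
\item the simplification $N=1728$ cancelling against the $36N$ and $2N$ denominators to give the stated prefactors $-1/(12\pi D^2)$ and $-1/(288\pi D^2)$;
\item the known nodal value $\widehat b_2$-normalization, for instance $b_2=-16380f_{\mathrm{sp}}$ from \cref{prop:sign-anchor}, which fixes the overall constant of $\beta_2$ through comparison with the integral representation \eqref{eq:fsp-direct-integral}.
\end{itemize}
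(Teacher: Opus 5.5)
Your plan is correct and is the paper's own argument. Your $c^{(0)}_\pm$ and $c^{(1)}_\pm$ are the paper's $A_\pm$ and $4B_\pm$ from \eqref{eq:Aplus}--\eqref{eq:Bminus}, and the recombination through $K=(K_++K_-)/2$, $\widehat K=(K_+-K_-)/2$ with $4\pi\ii\tau=4T$ is identical. The $f_{\mathrm{sp}}$ normalization check is not needed.

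When you carry out the computation, two rewrites shorten it. Use $j(\tau)/j_{\tau,z}=1+J/j_{\tau,z}$ and $\Delta(\tau)j(\tau)=E_4(\tau)^3$; then $1/j_{\tau,z}$ is needed only through $X^4$. The $J^2$ contributions from the last two rows of $\Upsilon_-^{(24)}$ cancel, because $\psi_2$ and $\psi_4$ share the constant term $2T+8\log2$.
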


\begin{proof}
All calculations take place in the ring of truncated Laurent series in $X$
with coefficients in
$\C(T,\pi,\log2,D,J,E_4,E_{10},E_{14},P_{-2},P_0,P_2,S_0,S_2,S_4)$.
The geometric identity
$1/(j(\tau)-j(z))=\sum_{m\geq0}j(z)^m/j(\tau)^{m+1}$ is valid
formally because $j(\tau)^{-1}\in X^2\C[[X^2]]$; its truncation through
degree $6$ is precisely \eqref{eq:inverse-jtauz-short}.

To make the coefficient collection explicit, write
$[X^4]K_+(\tau,z)=A_+(z)+4TB_+(z)$ and
$[X^4]K_-(\tau,z)=A_-(z)+4TB_-(z)$.  Substitution of
\eqref{eq:U-tau-series}--\eqref{eq:psi4-short-series} into the three rows of \eqref{eq:Upsilon-plus} gives
\begin{align}
 A_+={}&-\frac{\pi}{6D^2}\bigl(
 DE_4JP_{-2}-2163DE_4P_{-2}+1155DP_2-E_{14}P_0\bigr),
 \label{eq:Aplus}\\
 B_+={}&-\frac{\pi}{144D^2}\bigl(
 DE_4JP_{-2}-3528DE_4P_{-2}+DJP_2+1800DP_2-2E_{14}P_0\bigr).
 \label{eq:Bplus}
\end{align}
Likewise, substitution into \eqref{eq:Upsilon-minus} gives
\begin{align}
 A_-={}&\frac{1}{3\pi D^2}\bigl(
 8DJS_2\log2+DJS_2-7680DS_2\log2-320DS_2 \notag\\
 &\hspace{34mm}+4E_{10}S_4\log2-E_{10}S_4-3E_{14}S_0\bigr),
 \label{eq:Aminus}\\
 B_-={}&\frac{1}{12\pi D^2}
 \bigl(2DJS_2-1920DS_2+E_{10}S_4\bigr).
 \label{eq:Bminus}
\end{align}
These four identities are obtained by ordinary multiplication of the displayed
finite series; no omitted term can enter by the valuation argument preceding
the proposition.

Since $K=(K_++K_-)/2$ and $\widehat K=(K_+-K_-)/2$, coefficient comparison
gives $\alpha_2=(A_++A_-)/2$, $\beta_2=(B_++B_-)/2$,
$\widetilde\alpha_2=(A_+-A_-)/2$, and
$\widetilde\beta_2=(B_+-B_-)/2$.  Putting
\eqref{eq:Aplus}--\eqref{eq:Bminus} over common denominators gives
\eqref{eq:alpha2-explicit}--\eqref{eq:betatilde2-explicit} term by term.
\end{proof}

\section{The \texorpdfstring{$S$}{S}-cusp calculation}
\label{sec:cusp}

\subsection{Transformation to the cusp at zero}

Let $z=\ii t$, $w=-1/z=\ii/t$, and
$x=\e^{\pi\ii w}=\e^{-\pi/t}$.  The level-one transformations are
\begin{equation}\label{eq:S-transform-level-one}
 \Delta(z)=z^{-12}\Delta(w),
 \qquad
 E_k(z)=z^{-k}E_k(w)
 \quad(k=4,6,8,10,14),
 \qquad
 j(z)=j(w),
\end{equation}
and the $z$-basis factors become
\begin{align}
 P_{-2}(z)&=z^2,
 &P_0(z)&=E_2(w),
 &P_2(z)&=z^{-2}E_2(w)^2,
 \label{eq:P-transform}\\
 S_0(z)&=L_S(w),
 &S_2(z)&=-z^{-2}V(w),
 &S_4(z)&=z^{-4}\bigl(U(w)^2-W(w)^2\bigr).
 \label{eq:S-transform}
\end{align}
The quasimodular correction in $P_0,P_2$ cancels because those quantities were
defined using $E_2|_2S$ rather than $E_2$ itself.

After factoring the common weight $z^{-14}$ from the brackets in
\eqref{eq:alpha2-explicit} and \eqref{eq:beta2-explicit}, define
\begin{align}
\mathcal A(w):={}&
 \pi^2\Delta E_4j-2163\pi^2\Delta E_4
 +16\Delta jV\log2+2\Delta jV
 +1155\pi^2\Delta E_2^2 \notag\\
&-640\Delta V-15360\Delta V\log2
 -8E_{10}(U^2-W^2)\log2+2E_{10}(U^2-W^2) \notag\\
&-\pi^2E_{14}E_2+6E_{14}L_S,
\label{eq:calA}\\[1mm]
\mathcal B(w):={}&
 \pi^2\Delta E_4j-3528\pi^2\Delta E_4
 +\pi^2\Delta jE_2^2+24\Delta jV
 +1800\pi^2\Delta E_2^2 \notag\\
&-23040\Delta V-12E_{10}(U^2-W^2)-2\pi^2E_{14}E_2.
\label{eq:calB}
\end{align}
Every function on the right is evaluated at $w$.  Then
\begin{align}
 \alpha_2(\ii t)
 &=-\frac{z^{10}}{12\pi}\frac{\mathcal A(w)}{\Delta(w)^2},
 &\beta_2(\ii t)
 &=-\frac{z^{10}}{288\pi}\frac{\mathcal B(w)}{\Delta(w)^2}.
 \label{eq:alpha-beta-transformed}
\end{align}

Near $x=0$, the convergent series of the level-one forms are even, whereas
$V(-x)=-V(x)$, $(U^2-W^2)(-x)=-(U^2-W^2)(x)$, and
$L_S(-x)=-L_S(x)$.  Comparison with \eqref{eq:alphatilde2-explicit} and
\eqref{eq:betatilde2-explicit} gives the exact parity identities
\begin{align}
 \widetilde\alpha_2(\ii t)
 &=-\frac{z^{10}}{12\pi}
   \frac{\mathcal A(w)|_{x\mapsto-x}}{\Delta(w)^2},
 \label{eq:alphatilde-parity}\\
 \widetilde\beta_2(\ii t)
 &=-\frac{z^{10}}{288\pi}
   \frac{\mathcal B(w)|_{x\mapsto-x}}{\Delta(w)^2}.
 \label{eq:betatilde-parity}
\end{align}

\subsection{Finite cancellation at the cusp}

The required elementary expansions are
\begin{align}
 \Delta(w)&=x^2-24x^4+252x^6+\Oh(x^8),
 \label{eq:Delta-x}\\
 j(w)&=x^{-2}+744+196884x^2+21493760x^4
       +864299970x^6+\Oh(x^8),
 \label{eq:j-x}\\
 L_S(w)&=-16x-\frac{64}{3}x^3-\frac{96}{5}x^5
          -\frac{128}{7}x^7+\Oh(x^9).
 \label{eq:LS-x}
\end{align}
The last identity follows from $L_S=\log(W/U)$ and the theta expansions.
Before truncating \eqref{eq:calA} and \eqref{eq:calB}, we use the exact
identity $\Delta j=E_4^3$ to make the replacements
$\Delta E_4j=E_4^4$, $\Delta jV=E_4^3V$, and
$\Delta jE_2^2=E_4^3E_2^2$.  Every summand is then a convergent power series
in $x$ at the origin, and the series displayed in
\eqref{eq:U-tau-series}--\eqref{eq:E14-tau-series} (with $\tau$ replaced by
$w$), together with \eqref{eq:Delta-x} and \eqref{eq:LS-x}, determine all
coefficients through degree $6$.

For completeness, \cref{tab:A-cancellation,tab:B-cancellation} record every
summand through that degree.  Blank entries are zero.

\begin{table}[!htbp]
\centering
\small
\caption{Coefficients $[x^k]$ of the summands in $\mathcal A$, $0\leq k\leq6$.}
\label{tab:A-cancellation}
\resizebox{\textwidth}{!}{%
\begin{tabular}{@{}lrrrrrrr@{}}
\toprule
summand & $x^0$ & $x^1$ & $x^2$ & $x^3$ & $x^4$ & $x^5$ & $x^6$\\
\midrule
$\pi^2\Delta E_4j$
&$\pi^2$&&$960\pi^2$&&$354240\pi^2$&&$61543680\pi^2$\\
$-2163\pi^2\Delta E_4$
&&&$-2163\pi^2$&&$-467208\pi^2$&&$7241724\pi^2$\\
$16\Delta jV\log2$
&&$256\log2$&&$185344\log2$&&$46634496\log2$&\\
$2\Delta jV$
&&$32$&&$23168$&&$5829312$&\\
$1155\pi^2\Delta E_2^2$
&&&$1155\pi^2$&&$-83160\pi^2$&&$2120580\pi^2$\\
$-640\Delta V$
&&&&$-10240$&&$204800$&\\
$-15360\Delta V\log2$
&&&&$-245760\log2$&&$4915200\log2$&\\
$-8E_{10}(U^2-W^2)\log2$
&&$-256\log2$&&$60416\log2$&&$36530688\log2$&\\
$2E_{10}(U^2-W^2)$
&&$64$&&$-15104$&&$-9132672$&\\
$-\pi^2E_{14}E_2$
&$-\pi^2$&&$48\pi^2$&&$196128\pi^2$&&$33542976\pi^2$\\
$6E_{14}L_S$
&&$-96$&&$2176$&&$94398144/5$&\\
\midrule
sum &0&0&0&0&0&$\frac{1835008}{5}(43+240\log2)$
&$104448960\pi^2$\\
\bottomrule
\end{tabular}%
}
\end{table}

\begin{table}[!htbp]
\centering
\small
\caption{Coefficients $[x^k]$ of the summands in $\mathcal B$, $0\leq k\leq6$.}
\label{tab:B-cancellation}
\resizebox{\textwidth}{!}{%
\begin{tabular}{@{}lrrrrrrr@{}}
\toprule
summand & $x^0$ & $x^1$ & $x^2$ & $x^3$ & $x^4$ & $x^5$ & $x^6$\\
\midrule
$\pi^2\Delta E_4j$
&$\pi^2$&&$960\pi^2$&&$354240\pi^2$&&$61543680\pi^2$\\
$-3528\pi^2\Delta E_4$
&&&$-3528\pi^2$&&$-762048\pi^2$&&$11811744\pi^2$\\
$\pi^2\Delta jE_2^2$
&$\pi^2$&&$672\pi^2$&&$145152\pi^2$&&$8663424\pi^2$\\
$24\Delta jV$
&&$384$&&$278016$&&$69951744$&\\
$1800\pi^2\Delta E_2^2$
&&&$1800\pi^2$&&$-129600\pi^2$&&$3304800\pi^2$\\
$-23040\Delta V$
&&&&$-368640$&&$7372800$&\\
$-12E_{10}(U^2-W^2)$
&&$-384$&&$90624$&&$54796032$&\\
$-2\pi^2E_{14}E_2$
&$-2\pi^2$&&$96\pi^2$&&$392256\pi^2$&&$67085952\pi^2$\\
\midrule
sum &0&0&0&0&0&$132120576$&$152409600\pi^2$\\
\bottomrule
\end{tabular}%
}
\end{table}
\FloatBarrier

Thus $[x^6]\mathcal A=104448960\pi^2$ and
$[x^6]\mathcal B=152409600\pi^2$, and the two series begin as
\begin{align}
 \mathcal A(w)
 &=\frac{1835008}{5}(43+240\log2)x^5
   +104448960\pi^2x^6+\Oh(x^7),
 \label{eq:A-leading}\\
 \mathcal B(w)
 &=132120576x^5+152409600\pi^2x^6+\Oh(x^7).
 \label{eq:B-leading}
\end{align}

\begin{proposition}[Coefficient functions at the $S$-cusp]\label{prop:cusp-asymptotics}
Set $c_0:=458752/\pi$ and $\rho:=(43+240\log2)/15$.  As $t\to0^+$,
\begin{align}
 \alpha_2(\ii t)
 &=\rho c_0t^{10}\e^{-\pi/t}
   +\Oh\!\left(t^{10}\e^{-2\pi/t}\right),
 \label{eq:alpha-cusp}\\
 \beta_2(\ii t)
 &=c_0t^{10}\e^{-\pi/t}
   +\Oh\!\left(t^{10}\e^{-2\pi/t}\right),
 \label{eq:beta-cusp}\\
 \widetilde\alpha_2(\ii t)
 &=-\rho c_0t^{10}\e^{-\pi/t}
   +\Oh\!\left(t^{10}\e^{-2\pi/t}\right),
 \label{eq:alphatilde-cusp}\\
 \widetilde\beta_2(\ii t)
 &=-c_0t^{10}\e^{-\pi/t}
   +\Oh\!\left(t^{10}\e^{-2\pi/t}\right).
 \label{eq:betatilde-cusp}
\end{align}
More precisely, with $d_1:=10080\pi(713-840\log2)>0$, one has
\begin{align}
 \alpha_2(\ii t)-\rho\beta_2(\ii t)
 &=d_1t^{10}\e^{-2\pi/t}
   +\Oh\!\left(t^{10}\e^{-3\pi/t}\right),
 \label{eq:direct-difference-cusp}\\
 \widetilde\alpha_2(\ii t)-\rho\widetilde\beta_2(\ii t)
 &=d_1t^{10}\e^{-2\pi/t}
   +\Oh\!\left(t^{10}\e^{-3\pi/t}\right).
 \label{eq:fourier-difference-cusp}
\end{align}
\end{proposition}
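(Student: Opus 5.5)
The plan is to combine the transformed formulas \eqref{eq:alpha-beta-transformed}, \eqref{eq:alphatilde-parity}, \eqref{eq:betatilde-parity} with the leading expansions \eqref{eq:A-leading}, \eqref{eq:B-leading} and the expansion of $\Delta(w)^{-2}$ in $x=\e^{-\pi/t}$. First, with $z=\ii t$ one has $z^{10}=\ii^{10}t^{10}=-t^{10}$, so $\alpha_2(\ii t)=\frac{t^{10}}{12\pi}\mathcal A(w)\Delta(w)^{-2}$ and $\beta_2(\ii t)=\frac{t^{10}}{288\pi}\mathcal B(w)\Delta(w)^{-2}$. From \eqref{eq:Delta-x}, $\Delta(w)^{-2}=x^{-4}(1+48x^2+\Oh(x^4))$, a convergent Laurent series. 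Since $\mathcal A,\mathcal B$ are convergent power series at $x=0$ (after the replacement $\Delta j=E_4^3$, and using that $L_S$ is analytic at $x=0$ as the real logarithm of $W/U$ near $1$), the quotients $\mathcal A/\Delta^2$ and $\mathcal B/\Delta^2$ are convergent series in $x$ starting at $x^1$. Hence all remainders $\Oh(x^k)$ are genuine, uniform as $t\to0^+$.

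Next I read off the leading terms. For $\beta_2$, $\frac{1}{288\pi}\cdot132120576=\frac{458752}{\pi}=c_0$, giving \eqref{eq:beta-cusp}. For $\alpha_2$, $\frac{1}{12\pi}\cdot\frac{1835008}{5}(43+240\log2)=\frac{458752}{\pi}\cdot\frac{43+240\log2}{15}=\rho c_0$, giving \eqref{eq:alpha-cusp}. For the tilded functions, the substitution $x\mapsto-x$ flips the sign of the odd coefficient $x^5$ and keeps $\Delta(w)^2$ (even) unchanged, so \eqref{eq:alphatilde-cusp}, \eqref{eq:betatilde-cusp} follow with the opposite sign.

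For the refined statements I form $\alpha_2-\rho\beta_2=\frac{t^{10}}{12\pi\Delta^2}\bigl(\mathcal A-\frac{\rho}{24}\mathcal B\bigr)$. The $x^5$ coefficients cancel by the computation just done. The $x^6$ coefficient is $104448960\pi^2-\frac{\rho}{24}152409600\pi^2=104448960\pi^2-6350400\pi^2\rho$. Dividing by $12\pi$ and multiplying by the leading $x^{-4}$ of $\Delta^{-2}$ gives the $x^2$ coefficient $\pi(8704080-529200\rho)$. Substituting $\rho=(43+240\log2)/15$ gives $529200\rho=35280(43+240\log2)=1517040+8467200\log2$, so the coefficient equals $\pi(7187040-8467200\log2)=10080\pi(713-840\log2)=d_1$. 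The $48x^2$ correction of $\Delta^{-2}$ multiplies only the vanishing $x^5$ combination, so it contributes at order $x^3$ at the earliest, which yields \eqref{eq:direct-difference-cusp}.

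On the Fourier side the $x^6$ coefficients are even, so they are unchanged under $x\mapsto-x$; the $x^5$ terms still cancel since both flip sign. This gives the same $d_1$ in \eqref{eq:fourier-difference-cusp}. Positivity of $d_1$ reduces to $\log2<713/840\approx0.8488$, which follows from, e.g., $\log2<0.7$ via $\e^{0.7}>2$ (truncated exponential series).

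The main obstacle is not the arithmetic but justifying that the table values in \eqref{eq:A-leading}, \eqref{eq:B-leading} are exact, in particular the $x^5$ entry of $6E_{14}L_S$ and the cancellations through $x^4$. I would rely on the tables as established finite identities, verifying only that the truncation orders of the input series suffice, since each summand is a power series with nonnegative valuation.
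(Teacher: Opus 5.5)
Your proposal is correct and is essentially the paper's own argument: you use $z^{10}=-t^{10}$ and $\Delta(w)^{-2}=x^{-4}(1+48x^2+\Oh(x^4))$ to read off $c_0$ and $\rho c_0$ from the $x^5$ coefficients of $\mathcal A,\mathcal B$, get the tilded formulas from the $x\mapsto-x$ parity identities, and obtain $d_1$ as the $x^6$ coefficient of $\mathcal A/(12\pi)-\rho\mathcal B/(288\pi)$, with $\log2<7/10$ giving positivity. Like the paper, you rely on the cancellation tables for the $x^5$ and $x^6$ coefficients of $\mathcal A$ and $\mathcal B$, which is the same dependency the paper's proof has.
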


\begin{proof}
Write $\Delta(w)=x^2d(x)$, where
$d(x)=1-24x^2+252x^4+\Oh(x^6)$.  Since $d(0)=1$, formal inversion gives
$\Delta(w)^{-2}=x^{-4}(1+48x^2+\Oh(x^4))$.  Moreover, $z^{10}=(\ii t)^{10}=-t^{10}$.  Substitution of
\eqref{eq:A-leading} and \eqref{eq:B-leading} into
\eqref{eq:alpha-beta-transformed}, followed by this inverse-square expansion,
therefore gives the first two asymptotic
formulas.  Indeed, the leading constants are
$\frac{1}{12\pi}\frac{1835008}{5}(43+240\log2)=\rho c_0$ and
$132120576/(288\pi)=c_0$.  The parity identities
\eqref{eq:alphatilde-parity}--\eqref{eq:betatilde-parity} reverse the odd
$x^5$ coefficients and leave the even $x^6$ coefficients unchanged, which
proves the two tilded formulas.

For the refined statement, set
$F(x):=\mathcal A(w)/(12\pi)-\rho\mathcal B(w)/(288\pi)$.  The
coefficient of $x^5$ in $F$ is zero by the definition of $\rho$.  By
\cref{tab:A-cancellation,tab:B-cancellation}, its coefficient of $x^6$ is
\begin{align}
 \frac{104448960\pi^2}{12\pi}
 -\rho\frac{152409600\pi^2}{288\pi}
 &=\pi(8704080-529200\rho)\notag\\
 &=10080\pi(713-840\log2)=d_1.
 \label{eq:d1-calculation}
\end{align}
Because every transformed summand is analytic at $x=0$, it follows that
$F(x)=d_1x^6+\Oh(x^7)$.  The transformed formulas and the inverse-square expansion now give
\begin{align*}
 \alpha_2(\ii t)-\rho\beta_2(\ii t)
 &=t^{10}x^{-4}\bigl(1+48x^2+\Oh(x^4)\bigr)
   \bigl(d_1x^6+\Oh(x^7)\bigr)\\
 &=d_1t^{10}x^2+\Oh(t^{10}x^3).
\end{align*}
For the tilded difference the corresponding series is $F(-x)$.  Its $x^5$
coefficient is again zero and its $x^6$ coefficient is again $d_1$, so the
same calculation proves \eqref{eq:fourier-difference-cusp}.

Finally, $d_1>0$ follows without decimal approximation.  The exponential
series gives
$\e^{7/10}>1+7/10+(7/10)^2/2+(7/10)^3/6=12013/6000>2$; hence
$\log2<7/10$ and $713-840\log2>713-588=125$.
\end{proof}

\section{Laplace--Bessel transfer and boundedness}
\label{sec:laplace}

We now transfer the cusp expansions of \cref{prop:cusp-asymptotics} to the
large-radius behavior of the interpolation basis.  Proposition~5.4 of
\cite{CKMRV2022}, specialized to $d=24$ and $n=2$, gives
\begin{align}
 a_2(r)
 &=4\sin^2\!\left(\frac{\pi r^2}{2}\right)
   \int_0^\infty\alpha_2(\ii t)\e^{-\pi r^2t}\dd t,
 &&r^2>4,\label{eq:a2-Laplace}\\
 b_2(r)
 &=4\sin^2\!\left(\frac{\pi r^2}{2}\right)
   \int_0^\infty\beta_2(\ii t)\e^{-\pi r^2t}\dd t,
 &&r^2>4,\label{eq:b2-Laplace}\\
 \widehat a_2(r)
 &=4\sin^2\!\left(\frac{\pi r^2}{2}\right)
   \int_0^\infty\widetilde\alpha_2(\ii t)\e^{-\pi r^2t}\dd t,
 &&r^2>2,\label{eq:ahat2-Laplace}\\
 \widehat b_2(r)
 &=4\sin^2\!\left(\frac{\pi r^2}{2}\right)
   \int_0^\infty\widetilde\beta_2(\ii t)\e^{-\pi r^2t}\dd t,
 &&r^2>2.\label{eq:bhat2-Laplace}
\end{align}
The direct formulas are therefore valid on the whole sign domain
$r\geq\sqrt6$, and the Fourier formulas are valid in a neighborhood of
infinity.  Compactness and \cref{lem:quotient-extension} handle the remaining
Fourier interval.

For $a>0$ and $r>0$, put
\begin{equation}\label{eq:Ja-definition}
 J_a(r):=\int_0^\infty t^{10}
 \exp\!\left(-\frac{a\pi}{t}-\pi r^2t\right)\dd t.
\end{equation}
The classical integral representation of the modified Bessel function gives
\begin{equation}\label{eq:Ja-Bessel}
 J_a(r)=2a^{11/2}r^{-11}K_{11}(2\pi r\sqrt a).
\end{equation}
Consequently, as $r\to\infty$,
\begin{equation}\label{eq:Ja-asymptotic}
 J_a(r)
 =a^{21/4}r^{-23/2}\e^{-2\pi r\sqrt a}
  \left(1+\Oh_a(r^{-1})\right),
\end{equation}
where the leading constant simplifies exactly to $1$.  In particular,
\begin{equation}\label{eq:J-ratio-asymptotic}
 \frac{J_b(r)}{J_a(r)}
 =\left(\frac ba\right)^{21/4}
  \e^{-2\pi r(\sqrt b-\sqrt a)}
  \left(1+\Oh_{a,b}(r^{-1})\right).
\end{equation}

\begin{lemma}[Cusp-to-Laplace transfer]\label{lem:Laplace-transfer}
Let $u,v:(0,\infty)\to\C$ be continuous.  Suppose that, for some
$A,B\in\C$ with $B\neq0$ and some $\eta>1$,
\begin{align}
 u(t)&=At^{10}\e^{-\pi/t}
       +\Oh\!\left(t^{10}\e^{-\eta\pi/t}\right),\label{eq:u-cusp-general}\\
 v(t)&=Bt^{10}\e^{-\pi/t}
       +\Oh\!\left(t^{10}\e^{-\eta\pi/t}\right)\label{eq:v-cusp-general}
\end{align}
as $t\to0^+$.  Assume also that for some $M,c\geq0$,
\begin{equation}\label{eq:large-t-general-growth}
 |u(t)|+|v(t)|\ll (1+t)^M\e^{c t}\qquad(t\geq1).
\end{equation}
Then, for all sufficiently large $r$, both Laplace integrals converge and
\begin{equation}\label{eq:Laplace-ratio-limit}
 \frac{\int_0^\infty u(t)\e^{-\pi r^2t}\dd t}
      {\int_0^\infty v(t)\e^{-\pi r^2t}\dd t}
 \longrightarrow\frac AB.
\end{equation}
More precisely,
\begin{align}
 \int_0^\infty u(t)\e^{-\pi r^2t}\dd t
 &=AJ_1(r)+\Oh(J_\eta(r)),\label{eq:u-transfer}\\
 \int_0^\infty v(t)\e^{-\pi r^2t}\dd t
 &=BJ_1(r)+\Oh(J_\eta(r)).\label{eq:v-transfer}
\end{align}
If, in addition,
\begin{equation}\label{eq:refined-difference-hypothesis}
 u(t)-\frac ABv(t)
 =Dt^{10}\e^{-\eta\pi/t}
  +\Oh\!\left(t^{10}\e^{-\zeta\pi/t}\right)
\end{equation}
with $D\neq0$ and $\zeta>\eta$, then
\begin{equation}\label{eq:refined-transfer-conclusion}
 \frac{\int_0^\infty u(t)\e^{-\pi r^2t}\dd t}
      {\int_0^\infty v(t)\e^{-\pi r^2t}\dd t}
 -\frac AB
 =\frac DB\frac{J_\eta(r)}{J_1(r)}\bigl(1+o(1)\bigr).
\end{equation}
\end{lemma}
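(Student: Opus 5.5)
Split each Laplace integral at a small fixed $t_0\in(0,1)$ chosen so that the cusp expansions \eqref{eq:u-cusp-general}--\eqref{eq:v-cusp-general} hold on $(0,t_0]$ with explicit constants. On $(0,t_0]$, write $u(t)=At^{10}\e^{-\pi/t}+E_u(t)$ with $|E_u(t)|\le C_0t^{10}\e^{-\eta\pi/t}$. Then
\[
\int_0^{t_0}u(t)\e^{-\pi r^2t}\dd t = A J_1(r) - A\int_{t_0}^\infty t^{10}\e^{-\pi/t-\pi r^2t}\dd t + \Oh\bigl(J_\eta(r)\bigr),
\]
because $|E_u|\e^{-\pi r^2t}$ integrated over $(0,t_0]$ is at most $C_0J_\eta(r)$.

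The tails are handled as follows. For $r^2>c/\pi+1$, the growth hypothesis \eqref{eq:large-t-general-growth} gives absolute convergence. On $[t_0,1]$ use continuity, and on $[1,\infty)$ use \eqref{eq:large-t-general-growth}. Together these bound $\int_{t_0}^\infty|u|\e^{-\pi r^2t}\dd t$ by $\ll_{t_0}\e^{-\pi r^2t_0/2}$ for large $r$, and the same bound holds for $v$ and for the tail of the $J_1$ integral. By \eqref{eq:Ja-asymptotic}, $J_\eta(r)$ decays only like $r^{-23/2}\e^{-2\pi r\sqrt\eta}$, i.e. exponentially in $r$ rather than in $r^2$. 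Hence every tail is $o(J_\eta(r))$, indeed $\Oh(J_\eta(r))$, which proves \eqref{eq:u-transfer} and likewise \eqref{eq:v-transfer}.

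For the ratio, \eqref{eq:J-ratio-asymptotic} with $\eta>1$ gives $J_\eta/J_1\to0$ exponentially. Dividing \eqref{eq:u-transfer} by \eqref{eq:v-transfer} gives $(A+o(1))/(B+o(1))\to A/B$. Since $B\neq0$, the denominator is nonzero for large $r$.

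For the refined statement, apply the same splitting argument to $w:=u-\frac ABv$, with leading exponent $\eta$ in place of $1$ and error exponent $\zeta$. This gives $\int w\,\e^{-\pi r^2t}\dd t=DJ_\eta(r)+\Oh(J_\zeta(r))=DJ_\eta(r)(1+o(1))$, where the last step again uses \eqref{eq:J-ratio-asymptotic} with $\zeta>\eta$. Then
\[
\frac{\int u\,\e^{-\pi r^2t}}{\int v\,\e^{-\pi r^2t}}-\frac AB=\frac{\int w\,\e^{-\pi r^2t}}{\int v\,\e^{-\pi r^2t}}=\frac{DJ_\eta(1+o(1))}{BJ_1(1+o(1))},
\]
which is \eqref{eq:refined-transfer-conclusion}.

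The main point needing care is the tail comparison. One must check that the $\e^{-\pi r^2t_0/2}$ bound genuinely beats $J_\zeta(r)$, which is exponentially small in $r$ only, and that the same threshold $t_0$ works for both the $u$ and $w$ expansions. Both follow directly, since Gaussian-in-$r$ decay dominates any $\e^{-Cr}$.
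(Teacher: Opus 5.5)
Your proposal is correct and follows essentially the same route as the paper's proof. You split at a small $t_0$, compare with the model integrals $J_1,J_\eta$, and dispose of the $[t_0,1]$ and $[1,\infty)$ pieces with bounds that decay like a Gaussian in $r$ and so beat the $\e^{-2\pi r\sqrt a}$ decay of $J_a$; you then rerun the argument for $u-(A/B)v$ to get the refined statement. The only cosmetic difference is that you bound the error integral over $(0,t_0]$ directly by $C_0J_\eta(r)$ instead of first extending it to $(0,\infty)$, which is slightly cleaner.
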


\begin{proof}
Choose $\delta\in(0,1)$ so that the two cusp estimates hold uniformly on
$(0,\delta]$.  On this interval,
\begin{equation}\label{eq:small-t-u-transfer}
 \int_0^\delta u(t)\e^{-\pi r^2t}\dd t
 =A\int_0^\delta t^{10}\e^{-\pi/t-\pi r^2t}\dd t
 +\Oh\!\left(\int_0^\delta t^{10}
 \e^{-\eta\pi/t-\pi r^2t}\dd t\right).
\end{equation}
For fixed $a>0$, the tail of the corresponding model integral satisfies
\begin{align*}
 \int_\delta^\infty t^{10}\e^{-a\pi/t-\pi r^2t}\dd t
 &\leq \e^{-\pi\delta r^2/2}
       \int_0^\infty t^{10}\e^{-\pi r^2t/2}\dd t\\
 &=\Oh_\delta\!\left(r^{-22}\e^{-\pi\delta r^2/2}\right).
\end{align*}
By \eqref{eq:Ja-asymptotic}, this Gaussian bound is $o(J_a(r))$.  Extending
the two model integrals in \eqref{eq:small-t-u-transfer} from $\delta$ to
infinity therefore gives
\begin{equation}\label{eq:small-t-u-model}
 \int_0^\delta u(t)\e^{-\pi r^2t}\dd t
 =AJ_1(r)+\Oh(J_\eta(r)).
\end{equation}

It remains to estimate the actual integral outside $(0,\delta]$.  Continuity
on the compact interval $[\delta,1]$ gives a constant $K_\delta$ such that
\begin{equation}\label{eq:middle-t-tail}
 \int_\delta^1 |u(t)|\e^{-\pi r^2t}\dd t
 \leq K_\delta\e^{-\pi\delta r^2}=o(J_\eta(r)).
\end{equation}
For $t\geq1$, let $C$ be an admissible constant in
\eqref{eq:large-t-general-growth} and put $\lambda=\pi r^2-c$.  Once
$\lambda\geq2$, the elementary domination $(1+t)^M\leq C_M\e^t$ for
$t\geq1$ yields
\begin{equation}\label{eq:large-t-tail}
 \int_1^\infty |u(t)|\e^{-\pi r^2t}\dd t
 \leq CC_M\int_1^\infty \e^{-(\lambda-1)t}\dd t
 =\Oh_{u,M,c}\!\left(\e^{-(\lambda-1)}\right)
 =o(J_\eta(r)).
\end{equation}
The same estimates hold for $v$.  Combining
\eqref{eq:small-t-u-model}--\eqref{eq:large-t-tail} proves
\eqref{eq:u-transfer}--\eqref{eq:v-transfer}; it also proves convergence of
the Laplace integrals for all sufficiently large $r$.  Since
$J_\eta(r)/J_1(r)\to0$, the denominator is $BJ_1(r)(1+o(1))$ and is therefore
nonzero for all sufficiently large $r$.  This proves
\eqref{eq:Laplace-ratio-limit}.

The function $u-(A/B)v$ is continuous and satisfies the same large-$t$ growth
bound.  Repeating the preceding argument with
\eqref{eq:refined-difference-hypothesis} gives
\begin{equation}\label{eq:refined-difference-integral}
 \int_0^\infty\left(u(t)-\frac ABv(t)\right)
 \e^{-\pi r^2t}\dd t
 =DJ_\eta(r)+\Oh(J_\zeta(r)).
\end{equation}
After division by $BJ_1(r)(1+o(1))$, the relation
$J_\zeta(r)=o(J_\eta(r))$ proves
\eqref{eq:refined-transfer-conclusion}.
\end{proof}

\begin{proposition}[Common quotient limit]\label{prop:quotient-limit}
The smooth quotient extensions from \cref{lem:quotient-extension} satisfy
$\lim_{r\to\infty}R(r)=\lim_{r\to\infty}\widehat R(r)=\rho$.  In
particular, they are bounded above on their respective domains, and the number $C_*$ in \eqref{eq:Cstar-definition} is finite.
\end{proposition}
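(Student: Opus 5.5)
The plan is to apply \cref{lem:Laplace-transfer} twice: on the direct side with $u(t)=\alpha_2(\ii t)$, $v(t)=\beta_2(\ii t)$, and on the Fourier side with $u(t)=\widetilde\alpha_2(\ii t)$, $v(t)=\widetilde\beta_2(\ii t)$. The cusp hypotheses \eqref{eq:u-cusp-general}--\eqref{eq:v-cusp-general} come directly from \cref{prop:cusp-asymptotics} with $\eta=2$. On the direct side we take $A=\rho c_0$ and $B=c_0\neq0$; on the Fourier side we take $A=-\rho c_0$ and $B=-c_0$. In both cases $A/B=\rho$. Continuity of $u,v$ on $(0,\infty)$ is clear from the explicit formulas of \cref{prop:kernel-coefficients}: on the imaginary axis the modular forms are smooth, $D=\Delta(\ii t)\neq0$, and $L$ has a well-defined branch there.

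The step I expect to be the main obstacle is the large-$t$ growth bound \eqref{eq:large-t-general-growth}. For $t\geq1$ I would expand the brackets in \cref{prop:kernel-coefficients} directly at the cusp $\ii\infty$, using $q=\e^{-2\pi t}$:
\begin{itemize}
 \item The factor $D^{-2}$ grows like $\e^{4\pi t}$.
 \item $J$ grows like $\e^{2\pi t}$.
 \item The holomorphic forms $E_4,E_{10},E_{14},W,U,V$ are bounded.
 \item $P_{-2}=-t^2$ is polynomial, and $P_0,P_2$ are polynomial in $t$ because $(E_2|_2S)(\ii t)=-t^{-2}E_2(\ii/t)$ and the quasimodular transformation produces at most polynomial factors.
 \item $S_0=L(\ii t)$ is $O(t)$, from $\lambda\sim 16\e^{-\pi t}$.
\end{itemize}
Together these give $|u|+|v|\ll(1+t)^M\e^{6\pi t}$, with no cancellation needed, since we only need some $c$. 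Once the lemma applies, \eqref{eq:Laplace-ratio-limit} combined with \eqref{eq:a2-Laplace}--\eqref{eq:bhat2-Laplace} finishes the limit. For $r$ large and $r^2\notin2\Z$, the common factor $4\sin^2(\pi r^2/2)$ cancels in $a_2/b_2$, so $R(r)=I_\alpha(r)/I_\beta(r)$. At the nodes the same identity holds by \eqref{eq:R-node-value}, or alternatively by continuity of the smooth extension, since $I_\alpha/I_\beta$ is continuous for large $r$ once the denominator is nonzero. Hence $R(r)\to\rho$, and in the same way $\widehat R(r)\to\rho$.

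Boundedness then follows. Choose $r_0\geq\sqrt6$ so that $|R-\rho|\leq1$ and $|\widehat R-\rho|\leq1$ for $r\geq r_0$. On the compact intervals $[\sqrt6,r_0]$ and $[0,r_0]$ the smooth extensions from \cref{lem:quotient-extension} are continuous, hence bounded. Therefore $R$ and $\widehat R$ are bounded above on their full domains, and $C_*$ is finite. I would also record the refined statement for later use. The hypothesis \eqref{eq:refined-difference-hypothesis} holds with $\eta=2$, $\zeta=3$ and $D=d_1$, by \eqref{eq:direct-difference-cusp}--\eqref{eq:fourier-difference-cusp}. This gives $R-\rho\sim (d_1/c_0)J_2/J_1>0$ and $\widehat R-\rho\sim-(d_1/c_0)J_2/J_1<0$. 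That refinement is not needed for the present proposition.
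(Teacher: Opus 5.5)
Your proposal is correct and follows the paper's proof. You apply \cref{lem:Laplace-transfer} with the cusp data of \cref{prop:cusp-asymptotics} ($\eta=2$, $A/B=\rho$ on both sides), cancel the sine-square factor away from the nodes, use \eqref{eq:R-node-value}--\eqref{eq:Rhat-node-value} at the nodes, and get boundedness from compactness plus the limit. The only difference is the growth hypothesis \eqref{eq:large-t-general-growth}: you verify it by crudely bounding the explicit formulas of \cref{prop:kernel-coefficients}, giving $c=6\pi$, whereas the paper quotes the sharp large-$t$ expansions from \cite[Eq.~(5.22)]{CKMRV2022}, giving $c=4\pi$ and $c=2\pi$; since the lemma needs only some finite $c$, either works.
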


\begin{proof}
For $d=24$, the coefficient expansions in
\cite[Eq.~(5.22)]{CKMRV2022} start at index $m_0=-2$.  Specializing them to
$n=2$ and $z=\ii t$ gives, as $t\to\infty$,
\begin{align}
 \alpha_2(\ii t)
 &=t\e^{4\pi t}+\Oh\!\left((1+t)^2\e^{2\pi t}\right),
 \label{eq:alpha-large-t}\\
 \beta_2(\ii t)
 &=\frac{1}{4\pi}\e^{4\pi t}
   +\Oh\!\left((1+t)^2\e^{2\pi t}\right),
 \label{eq:beta-large-t}\\
 |\widetilde\alpha_2(\ii t)|+|\widetilde\beta_2(\ii t)|
 &=\Oh\!\left((1+t)^2\e^{2\pi t}\right).
 \label{eq:tilded-large-t}
\end{align}
Thus the hypotheses of \cref{lem:Laplace-transfer} hold on the direct side
with $c=4\pi$ and on the Fourier side with $c=2\pi$.

Away from the interpolation nodes, the common sine-square factor in
\eqref{eq:a2-Laplace}--\eqref{eq:b2-Laplace} cancels, and
\cref{lem:Laplace-transfer}, together with
\eqref{eq:alpha-cusp}--\eqref{eq:beta-cusp}, gives
\begin{equation}\label{eq:R-limit-proof}
 R(r)=\frac{\int_0^\infty\alpha_2(\ii t)\e^{-\pi r^2t}\dd t}
 {\int_0^\infty\beta_2(\ii t)\e^{-\pi r^2t}\dd t}
 \longrightarrow\rho.
\end{equation}
At a direct-side node, \eqref{eq:R-node-value} identifies the smooth extension
with exactly the same ratio of Laplace integrals, so the limit holds without
excluding nodal sequences.  The tilded cusp formulas similarly give
\begin{equation}\label{eq:Rhat-limit-proof}
 \widehat R(r)\longrightarrow\frac{-\rho c_0}{-c_0}=\rho,
\end{equation}
and \eqref{eq:Rhat-node-value} again covers the nodes.

A continuous function with a finite limit at infinity is bounded on a
half-line: it is bounded on a compact initial interval, and the defining limit
bounds it on the complementary tail.  Applying this observation to the smooth
extensions proves the final assertion.
\end{proof}

The second exact cusp coefficient determines from which side the two
quotients approach their common limit.

\begin{proposition}[First exponential correction]\label{prop:quotient-correction}
Set $\kappa:=45\pi^2(713-840\log2)/2^{23/4}>0$.  Then, as
$r\to\infty$,
\begin{align}
 R(r)
 &=\rho+\kappa\e^{-2\pi(\sqrt2-1)r}
   \left(1+\Oh(r^{-1})\right),
 \label{eq:R-refined-asymptotic}\\
 \widehat R(r)
 &=\rho-\kappa\e^{-2\pi(\sqrt2-1)r}
   \left(1+\Oh(r^{-1})\right).
 \label{eq:Rhat-refined-asymptotic}
\end{align}
Consequently $C_*>\rho$.
\end{proposition}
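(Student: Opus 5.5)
The plan is to apply the refined half of \cref{lem:Laplace-transfer} directly, with the cusp data of \cref{prop:cusp-asymptotics}, and then make the constants explicit through \eqref{eq:J-ratio-asymptotic}.

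\textbf{Direct side.} Take $u(t)=\alpha_2(\ii t)$ and $v(t)=\beta_2(\ii t)$.

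The hypotheses hold with the following data:
\begin{itemize}
\item $A=\rho c_0$, $B=c_0$ and $\eta=2$, by \eqref{eq:alpha-cusp}--\eqref{eq:beta-cusp}; the error there is $\Oh(t^{10}\e^{-2\pi/t})$.
\item The large-$t$ growth bound holds with $c=4\pi$, by \eqref{eq:alpha-large-t}--\eqref{eq:beta-large-t}.
\item The refined hypothesis \eqref{eq:refined-difference-hypothesis} holds with $D=d_1$ and $\zeta=3$, by \eqref{eq:direct-difference-cusp}.
\end{itemize}
Here $A/B=\rho$.

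Away from nodes, the sine-square factors in \eqref{eq:a2-Laplace}--\eqref{eq:b2-Laplace} cancel, so $R(r)$ is the ratio of Laplace integrals. At nodes, \eqref{eq:R-node-value} identifies the smooth extension with the same ratio. Hence for all large $r$,
\[
R(r)-\rho=\frac{d_1J_2(r)+\Oh(J_3(r))}{c_0J_1(r)+\Oh(J_2(r))}.
\]

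I would sharpen the $(1+o(1))$ of \eqref{eq:refined-transfer-conclusion} to $(1+\Oh(r^{-1}))$ by reading the errors off this display:
\begin{itemize}
\item $J_3/J_2$ and $J_2/J_1$ are exponentially small by \eqref{eq:J-ratio-asymptotic}.
\item So $R(r)-\rho=(d_1/c_0)(J_2/J_1)\bigl(1+\Oh(\e^{-\epsilon r})\bigr)$ for some $\epsilon>0$.
\item Then \eqref{eq:J-ratio-asymptotic} gives $J_2/J_1=2^{21/4}\e^{-2\pi(\sqrt2-1)r}(1+\Oh(r^{-1}))$.
\end{itemize}

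It remains to check the constant. Since $458752=7\cdot2^{16}$,
\[
\frac{d_1}{c_0}2^{21/4}=\frac{10080\pi^2(713-840\log2)}{7\cdot 2^{16}}2^{21/4}=\frac{45\pi^2(713-840\log2)}{2^{11}}2^{21/4}=\kappa .
\]
Positivity $\kappa>0$ follows from $713-840\log2>0$, which was proved at the end of \cref{prop:cusp-asymptotics}.

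\textbf{Fourier side.} The argument is identical with $u=\widetilde\alpha_2(\ii t)$ and $v=\widetilde\beta_2(\ii t)$. The data change as follows:
\begin{itemize}
\item $A=-\rho c_0$ and $B=-c_0$, so $A/B=\rho$ again.
\item $c=2\pi$, from \eqref{eq:tilded-large-t}.
\item $D=d_1$, by \eqref{eq:fourier-difference-cusp}.
\end{itemize}
Now $D/B=-d_1/c_0$, which produces the minus sign in \eqref{eq:Rhat-refined-asymptotic}. The Laplace formulas hold near infinity, and \eqref{eq:Rhat-node-value} covers the nodes.

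\textbf{Consequence.} Since $\kappa>0$, \eqref{eq:R-refined-asymptotic} gives $R(r)>\rho$ for all sufficiently large $r$. Hence $\sup_{r\ge\sqrt6}R>\rho$, and so $C_*>\rho$.

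The only delicate point is the error bookkeeping: upgrading $o(1)$ to $\Oh(r^{-1})$. I expect this to be routine, because every competing error term is exponentially smaller than the Bessel-ratio error.
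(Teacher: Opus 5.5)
Your proposal is correct and follows the paper's proof essentially step for step. Like the paper, you apply the refined transfer lemma with $A=\rho c_0$, $B=c_0$, $D=d_1$, $\eta=2$, $\zeta=3$, and use the explicit quotient $(d_1J_2+O(J_3))/(c_0J_1+O(J_2))$ together with the exponential smallness of $J_3/J_2$ and $J_2/J_1$ to upgrade $o(1)$ to $O(r^{-1})$. You then use $J_2/J_1=2^{21/4}e^{-2\pi(\sqrt2-1)r}(1+O(r^{-1}))$, and on the Fourier side the sign flip comes from $B=-c_0$. Your explicit check $458752=7\cdot 2^{16}$ of the constant $\kappa$ is a detail the paper only asserts.
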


\begin{proof}
Apply the refined part of \cref{lem:Laplace-transfer} to
\eqref{eq:direct-difference-cusp} with $A=\rho c_0$, $B=c_0$, $D=d_1$,
$\eta=2$, and $\zeta=3$.  The ratio representation is
valid at the interpolation nodes as well, by \eqref{eq:R-node-value}.  Hence
\begin{equation}\label{eq:R-minus-rho-J}
 R(r)-\rho=\frac{d_1}{c_0}\frac{J_2(r)}{J_1(r)}(1+o(1)).
\end{equation}
More explicitly, the numerator is $d_1J_2(r)+\Oh(J_3(r))$, while the
denominator is $c_0J_1(r)+\Oh(J_2(r))$.  The resulting relative error is
$\Oh(J_3/J_2)+\Oh(J_2/J_1)$, which is exponentially small by
\eqref{eq:J-ratio-asymptotic}.  The standard expansion
\begin{equation}\label{eq:Bessel-full-first}
 K_\nu(s)=\sqrt{\frac{\pi}{2s}}\e^{-s}
 \left(1+\frac{4\nu^2-1}{8s}+\Oh_\nu(s^{-2})\right)
\end{equation}
therefore gives
\begin{equation}\label{eq:J2-J1-specific}
 \frac{J_2(r)}{J_1(r)}
 =2^{21/4}\e^{-2\pi(\sqrt2-1)r}
  \left(1+\Oh(r^{-1})\right).
\end{equation}
Finally,
$(d_1/c_0)2^{21/4}
 =45\pi^2(713-840\log2)/2^{23/4}=\kappa$,
which proves \eqref{eq:R-refined-asymptotic}.

For the Fourier quotient, \eqref{eq:fourier-difference-cusp} has the same
leading numerator coefficient $d_1$, whereas
\eqref{eq:betatilde-cusp} has leading denominator coefficient $-c_0$.
Using \eqref{eq:Rhat-node-value} at the nodes, the same calculation yields
\eqref{eq:Rhat-refined-asymptotic}.  Since the factor
$1+\Oh(r^{-1})$ is positive for all sufficiently large $r$, one has
$R(r)>\rho$ on a terminal interval.  Therefore $C_*\geq\sup R>\rho$.
\end{proof}

\section{Completion of the conjecture}
\label{sec:completion}

We now combine the analytic construction with Poisson summation.  We use the
standard normalization of the Leech lattice: it is even unimodular, has no
vectors of squared length $2$, and its shortest nonzero vectors have squared
length $4$; there are exactly $196560$ of them \cite{ConwaySloane}.  Thus
$\Leech^*=\Leech$ and $|x|^2\in2\Z_{\geq2}$ for every
$x\in\Leech\setminus\{0\}$.

\begin{proof}[Proof of \cref{thm:main}]
By \cref{prop:quotient-limit}, the threshold $C_*$ is finite.  The exact sign
reduction in \cref{prop:sign-reduction} proves that, for every $C\geq C_*$,
$g_C(r)\leq0$ for $r\geq\sqrt6$ and $\widehat g_C(r)\geq0$ for $r\geq0$.
Equation \eqref{eq:gC-at-2} gives $g_C(2)=1$.

Because $g_C$ is Schwartz and $\Leech$ is self-dual, Poisson summation gives
\begin{equation}\label{eq:Poisson-gC}
 \sum_{x\in\Leech}g_C(x)=
 \sum_{y\in\Leech}\widehat g_C(y).
\end{equation}
On the direct side, \eqref{eq:gC-direct-nodes} annihilates every nonzero
Leech shell except the shortest shell $|x|=2$, on which $g_C(2)=1$.  On the
Fourier side, \eqref{eq:gC-fourier-nodes} annihilates every nonzero Leech
shell.  Hence \eqref{eq:Poisson-gC} reduces exactly to
\begin{equation}\label{eq:Poisson-reduced-196560}
 g_C(0)+196560\,g_C(2)=\widehat g_C(0).
\end{equation}
Since $g_C(2)=1$, this is
\begin{equation}\label{eq:ratio-final}
 \frac{\widehat g_C(0)-g_C(0)}{g_C(2)}=196560.
\end{equation}
Thus all four requirements of the Cohn--Kumar conjecture hold.

The converse assertion within the affine line is exactly the converse part of
\cref{prop:sign-reduction}.  Finally, \cref{prop:quotient-correction} gives
$C_*>\rho$, proving \eqref{eq:Cstar-strict-lower}.
\end{proof}

\begin{proof}[Proof of \cref{cor:theta-shells}]
Fix $m\geq2$.  Poisson summation applies to the Schwartz function $a_m$.
By \eqref{eq:a-data}, its value is $1$ on the Leech shell of squared length
$2m$ and $0$ on every other nonzero Leech shell, while its Fourier transform
vanishes on every nonzero Leech shell.  Hence
$a_m(0)+N_m=\widehat a_m(0)$, which proves \eqref{eq:shell-interpolation}.

For the explicit formula, normalize the theta series by
\begin{equation}\label{eq:theta-normalization}
 \Theta_{\Leech}(z):=\sum_{x\in\Leech}q^{|x|^2/2}
 =1+\sum_{m\geq2}N_mq^m,
 \qquad q=\e^{2\pi\ii z}.
\end{equation}
Because $\Leech$ is even unimodular of rank $24$, its theta series belongs to
$M_{12}(\mathrm{SL}_2(\Z))$.  The standard dimension formula gives
$M_{12}(\mathrm{SL}_2(\Z))=\C E_{12}\oplus\C\Delta$.  The constant term in
\eqref{eq:theta-normalization} forces the coefficient of $E_{12}$ to be $1$.
The Leech lattice has no vectors of squared length $2$, so the coefficient of
$q$ is zero; since $[q]E_{12}=65520/691$ and $[q]\Delta=1$, the coefficient of
$\Delta$ is $-65520/691$.  Thus
\begin{equation}\label{eq:Leech-theta-series}
 \Theta_{\Leech}(z)=E_{12}(z)-\frac{65520}{691}\Delta(z).
\end{equation}
Finally,
\begin{equation}\label{eq:E12-and-Delta-coefficients}
 E_{12}(z)=1+\frac{65520}{691}
 \sum_{n\geq1}\sigma_{11}(n)q^n,
 \qquad
 \Delta(z)=\sum_{n\geq1}\tau(n)q^n.
\end{equation}
Comparing the coefficient of $q^m$ in the last two displays proves
\eqref{eq:shell-ramanujan}.
\end{proof}

\section{Reproducibility and relation to previous work}
\label{sec:scope}

No floating-point computation, numerical optimization, or new
computer-assisted inequality is used in the arguments of this article.
The only elementary scalar estimates required are the exact inequalities
$\pi<22/7$ and $\log 2<7/10$, both proved at the points where they are used.
The global sign properties of the anchor function are taken from
\cite{CKMRV2017}, and the interpolation identities are taken from
\cite{CKMRV2022}.  The new algebraic input consists of the explicit finite
coefficient identities \eqref{eq:Aplus}--\eqref{eq:Bminus} and the
cancellations recorded in
\cref{tab:A-cancellation,tab:B-cancellation}; the subsequent transfer is the
analytic argument of \cref{sec:laplace}.  In particular, no logical step in
the proof depends on numerical or symbolic verification.

Fourier interpolation has developed substantially since the
one-dimensional formula of Radchenko and Viazovska \cite{RV2019}.  Relevant
directions include interpolation and summation formulas in broader settings
\cite{Stoller2020,BRS2023,GV2025}, quantitative and structural questions
concerning exceptional lattices \cite{BRR2024,Cohn2024,Lee2024,Lee2026}, and
recent work on low-dimensional or perturbed Fourier interpolation
\cite{CasseseRamos2025,RadchenkoSun2025,BBRSS2025}.  These works provide
important context and related techniques.  To the best of our knowledge,
however, the affine construction \eqref{eq:gC-definition} and the proof of the
2009 ratio conjecture via bounds for the quotients in
\eqref{eq:R-definitions} do not appear in the previous literature.

\section*{Author and disclosure statements}

\noindent\textbf{Use of computational and generative tools.}
The affine ansatz \eqref{eq:gC-definition}, the quotient strategy in
\cref{sec:laplace}, and portions of the symbolic coefficient extraction in
\cref{sec:kernel,sec:cusp} were developed with assistance from an OpenAI
language model.  Responsibility for the mathematical content and citations
rests with the author.

\end{document}